\newcommand{\z}{\mathbb{Z}}
\newcommand{\R}{\mathbb{R}}
\newcommand{\C}{\mathbb{C}}
\newcommand{\K}{\mathcal{K}}
\newcommand{\E}{\mathbb{E}}
\newcommand{\bpf}{\begin{proof}}
	\newcommand{\epf}{\end{proof}}
\newtheorem{remark}{Remark}
\title{Robust Approximation of the Stochastic Koopman Operator \thanks{Submitted to the editor April 21, 2021}}
\author{Mathias Wanner\thanks{Department of Mechanical Engineering, 
		University of California, Santa Barbara (\email{mwanner@ucsb.edu},\email{mezic@ucsb.edu})}
	\and
	Dr. Igor Mezi\'{c} \footnotemark[2]
}
\begin{document}
	\maketitle
	\begin{abstract}
		We analyze the performance of Dynamic Mode Decomposition (DMD)-based approximations of the stochastic Koopman operator for random dynamical systems where either the dynamics or observables are affected by noise. For many DMD algorithms, the presence of noise can introduce a bias in the DMD operator, leading to poor approximations of the dynamics. In particular, methods using time delayed observables, such as Hankel DMD, are biased when the dynamics are random. We introduce a new, robust DMD algorithm that can approximate the stochastic Koopman operator despite the presence of noise. We then demonstrate how this algorithm can be applied to time delayed observables, which allows us to generate a Krylov subspace from a single observable. This allows us to compute a realization of the stochastic Koopman operator using a single observable measured over a single trajectory. We test the performance of the algorithms over several examples.
	\end{abstract}
	
	\begin{keyword}
		Koopman Operator Theory; Random Dynamical Systems; Dynamic Mode Decomposition			
	\end{keyword}
	
	\begin{AMS}
		37H99, 37M25, 47B33
	\end{AMS}

	
	%

	\section{Introduction}
	For many complex systems and processes, governing equations cannot be derived through first principles or the models generated by them may be too complicated to be of practical use. Additionally, for such a system, the true state of the system may be difficult or even impossible to measure, making a state-space model impractical for applications such as control or prediction. Instead, only a limited set of measurements, or observables, will be made available. One of the tools available to model such a system is the Koopman operator. The Koopman operator represents a system in a high-dimensional linear space, which allows us to use spectral methods to analyze the system.
	
	Originally introduced in \cite{Koopman}, the Koopman Operator has gained traction for its utility as a data driven method through various form of Koopman Mode Decomposition (KMD), which decomposes the system based on eigenfunctions of the Koopman operator \cite{Mezic2005}, \cite{Mezic2019}. Introduced in \cite{Mezic2000}, Generalized Laplace Analysis (GLA) is an early data driven method of KMD based on the generalized Laplace transform. Another data driven method is Dynamic Mode Decomposition (DMD), which was introduced in \cite{Schmid} and shown to be connected to KMD in \cite{Rowley}. DMD algorithms attempt to find a matrix which approximates a finite section of the Koopman operator \cite{Mezic2020}. There are many different variations of DMD and it can be used for a wide array of applications. Despite their widespread use, many DMD algorithms possess a major drawback; they can fail if the data contains noise or other randomness.
	
	We can also use the Koopman methodology for random systems, provided the system is a homogeneous Markov process. These include, for example, stochastic differential equations driven by Gaussian white noise and discrete systems generated by generated by i.i.d. random maps. For such systems, the eigenvalues produced by standard DMD algorithms converge to the spectrum of the stochastic Koopman operator, provided the observables themselves do not contain any randomness and lie within a finite dimensional invariant subspace \cite{Williams2015data}. However, if the observables do contain noise, the results from standard DMD algorithms are biased \cite{Dawson}.
	Total Least Squares (TLS) DMD (\cite{Dawson}, \cite{hemati2017biasing}) was developed to remove the bias for systems with measurement noise, but only converges when the underlying dynamics are deterministic. In \cite{Kawahara}, subspace DMD was introduced to converge for observables with additive noise even when the underlying dynamics are random. While many of these methods can combat the bias from measurement noise in DMD, they impose relatively strict assumptions on either the dynamics or the structure of the noise.
	
	Of particular interest are Krylov subspace based DMD methods, where the iterates of a single observable under the Koopman evolution is used to (approximately) generate an invariant subspace of the Koopman operator \cite{brunton2017chaos},\cite{Mezic2020}. For deterministic systems, Hankel DMD uses time delays of a single observable to generate the Krylov subspace, and was shown to converge in \cite{Arbabi}. This allows us to generate a model of a deterministic system using the data from a single trajectory of a single observable. However, for random systems, the time delayed observables contain randomness from the dynamics, and Hankel DMD does not converge. Further, the noise introduced is neither i.i.d. nor independent of the state. In \cite{Nelly}, a new Stochastic Hankel DMD algorithm was shown to converge, but it requires the Stochastic Koopman evolution of the observable, which in general requires multiple realizations of the system. 
	
	In this paper, we introduce a new DMD algorithm which allows us to work with a more general set of observables with noise. This algorithm provably approximates the stochastic Koopman operator in the large data limit and allows for more general  randomness in the observables than i.i.d. measurement noise. With these weaker conditions, we can use time delayed observables to form a Krylov subspace of observables, which gives us a variation of Hankel DMD for random systems. This allows us to compute a realization of the stochastic Koopman operator using data from a single observable over a single realization of the system. The paper is organized as follows: First we review the basics of random dynamical systems and the stochastic Koopman operator. Then, we establish the convergence of standard DMD algorithms for random systems in the absence of noise. Finally, we demonstrate the failure of standard DMD algorithms in the presence of noise and introduce a new algorithm which can accurately approximate the stochastic Koopman operator using noisy observables.

	\section{Preliminaries}
	
	In this paper, we consider random dynamical systems that are generated by random i.i.d. maps. We can use these systems to represent homogeneous Markov processes: systems for which transition probabilities are completely determined by the state; they depend neither on the time nor the past of the system. The random systems considered in this paper will be as follows:

	Let $(\Omega,\mathfrak{F},P)$ be a probability space, and let ($\mathbb{T}=\z$ or $\z^+$) be a semigroup. Let $\{\theta_t\}_{t\in\mathbb{T}}$ be a group or semigroup of measurable transformations on $\Omega$ which preserve the measure $P$. This forms a measure preserving dynamical system on $\Omega$. We will denote the one step map $\theta:=\theta_1$.
	Now, let $(M,\mathfrak{B})$ be a measurable space. Let $T$ be a function that associates to each $\omega$ a random map $T_\omega$ such that:
	\begin{enumerate}
		\item The map $T_\omega:M\to M$ is $\mathfrak{B}$ measurable, and
		\item The maps $T_{\theta_t\omega}$, $t\in \mathbb{T}$, are independent and identically distributed.
	\end{enumerate}
	If the maps $T_{\omega}$ satisfy the above properties, we will call $T$ an i.i.d. random system. For the duration of the paper, the above two properties will always be assumed to hold.

	Now, given a $T$ that satisfies the above properties, we can define the $n$-step evolution of $T$ by
	\begin{equation}
	\label{eq:MapIter}
	T_\omega^n=T_{\theta_{n-1}\omega}\circ T_{\theta_{n-2}\omega}\circ \hdots \circ T_{\theta\omega}\circ T_\omega.
	\end{equation}
	For a given $x\in M$ and $\omega\in \Omega$, the points $T_\omega^tx$ are the trajectory of $x$. We will denote $x_t=T_{\omega_0}^tx_0$ and $\omega_t=\theta_t\omega$ when considering a sample trajectory with initial conditions $x_0$ and $\omega_0$.
	
	These systems driven by i.i.d. maps can be used to represent homogeneous Markov processes. If $M$ is a polish space, for any discrete time Markov process we can find a set of i.i.d. maps that satisfies the transition probabilities of the Markov process (\cite{Kifer}, Theorem 1.1). In this paper, we only need to consider the discrete time case, since the algorithms considered only require a discrete set of data.
	
	\begin{remark}
	If we let $T_\omega^0$ equal the identity, we can immediately see from (\ref{eq:MapIter}) that the maps $T_\omega^t$ form a cocycle over the positive integers:
	\begin{equation}
	\label{eq:cocycle}
	T_\omega^0=id_M,~~~ \text{and} ~~~~~ T_\omega^{t+s}=T_{\theta_s(\omega)}^t\circ T_\omega^s.
	\end{equation}
	With this in mind, the $T$ is also a random dynamical system on $\z_+$ in the sense of \cite{Arnold1998}. However, these are a more general class of system and do not necessarily have the i.i.d. property we require in this paper.
	\end{remark}

		\subsection{Koopman Operators}
		
		Typically, we will not have access the state of the system at any given time. Instead, we will be able to measure some set of functions on the state space.
		\begin{definition} 
			\label{def:observable}
			An observable is any $\mathfrak{B}$ measurable map $f:M\to \C$.
		\end{definition}
		We are interested in the evolution of observables over time. For a deterministic system, the Koopman family of operators is defined to evolve an observable, $f$, on the state space under the flow, $S^t$ of the system: $U^tf=f\circ S^t$. Studying the Koopman evolution of observables has several benefits. First, the state of the system can be reconstructed from a sufficient set of observables, so no information is lost moving to the observable space. Second, a certain choice of observables may lead to a simple representation of the system with linear dynamics. Additionally, since the Koopman operator is a linear operator, it allows us to use spectral methods to study the system.
		
		However, since a random system can have many possible realizations we cannot simply define the stochastic Koopman operator as the composition with the flow. Instead, the stochastic Koopman operators are defined using the expectation of the evolution of observables.
		\begin{definition} 
			\label{def:StochKoop}
			The stochastic Koopman operator, $\K^t$, is defined for for i.i.d. random systems by
			\[\K^tf(x)=\E_{P}(f\circ T_\omega^t(x))=\int_\Omega f\circ T_\omega^t(x)dP.\]
		\end{definition}
		For a discrete time map we will denote the one step Koopman evolution as $\K^1=\K$. The operators $\K^t$ are also called the transition operators for a Markov process.
		
		In order for definition \ref{def:StochKoop} to be useful, we require this family of operators to be consistent in a certain sense: the stochastic Koopman family of operators should form a semigroup:
		\begin{equation}
		\label{eq:semigroup}
		\K^{t+s}f=\K^s\circ \K^tf, ~~~~~s,t\geq 0.
		\end{equation}
		For the deterministic Koopman operators, this is clearly true provided the system is autonomous and solutions exist and are unique, since in this case the flow forms a semigroup. For the stochastic Koopman family of operators on a i.i.d. system, the semigroup property is guaranteed by the independence of the maps $T_{\theta_t\omega}^s$ and $T_\omega^t$, since
		\[\K^{t+s}f(x)=\E_P(f(T_\omega^{t+s}x))=\E_P(f(T_{\theta_t\omega}^sT_\omega^t x))=\E_P(\K^sf(T_\omega^tx))=\K^t\K^sf(x).\]\\
		
		\subsection{Stationarity and Ergodicity}
		We will also assume that our systems will be stationary, meaning they have a stationary measure, $\mu$.
		\begin{definition}
			\label{def:invariant}
			A measure $\mu$ is called invariant, or stationary, if
			\[\mu(A)=\int_M \int_\Omega\chi_A(T_\omega x) dP\,d\mu,\]
			where $\chi_A$ is the indicator function for $A\subset M$.
		\end{definition}
		If $\mu$ is a stationary measure, we have the equality
		\begin{equation}
		\label{eq:stationary}
		\int_M\int_\Omega f(T_\omega^{t_1+s}x,...,T_\omega^{t_n+s}x)\,dPd\mu=\int_M\int_\Omega f(T_\omega^{t_1}x,...,T_\omega^{t_n}x)\,dPd\mu
		\end{equation}
		for any $s, t_1,...,t_n$ (\cite{Gikhman}, p.86).
		
		Since our DMD algorithms will be using data sampled off of a single trajectory of our system, we need the trajectory to sample the measure $\mu$. For this we require that $\mu$ be an ergodic measure.
		\begin{definition}
			A set $A \subset M$ is called invariant if 
			\[\int_\Omega \chi_A(T_\omega x)=\chi_A(x)\]
			for almost every $x$.
		\end{definition}
		\begin{definition}
			A stationary measure $\mu$ is called ergodic if every invariant set has measure $0$ or $1$.
		\end{definition}
		The ergodicity assumptions ensures that almost every trajectory samples the entire space, not just some invariant subset. With this assumption, we can use time averages to evaluate integrals over the space.
		\begin{lemma}
			\label{le:ergodic}
			Suppose $\mu$ is an ergodic measure. Let
			\[h(x,\omega)=\hat{h}(T_\omega^{t_1}x,T_\omega^{t_2}x,...,T_\omega^{t_n}x)\]
			for some $t_1,t_2,...,t_n$, with $h\in L^1(\mu\times P)$. Then we have
			\begin{equation}
			\label{eq:ergodicsum}
			\lim_{m\to\infty}\frac{1}{m} \sum_{j=0}^{m-1} h(x_j,\omega_j)=\int_M\int_\Omega h(x,\omega)dPd\mu
			\end{equation}
			for almost every $(x_0,\omega_0)$ with respect to $\mu\times P$.
		\end{lemma}
		\begin{proof}
			This is theorem 2.2 in chapter 1 of \cite{Kifer}, applied to the sum on the left hand side of (\ref{eq:ergodicsum}).
		\end{proof}

	\section{Dynamic Mode Decomposition}
	Dynamic Mode Decomposition is an algorithm which allows the computation of an approximation of the Koopman operator from data. Assuming the eigenfunctions, $\phi_j$, of $\K$ span our function space, we can decompose any (possibly vector valued) observable $\mathbf{f}$ as 
	\[\mathbf{f}=\sum_j v_j \phi_j.\]
	The expected evolution of $f$ is then given by
	\begin{equation}
	\label{eq:KoopExpansion}
	\E_P(\mathbf{f}(T_\omega x))=\sum_j v_j \K\phi_j(x)=\sum_j \lambda_j v_j \phi_j(x).
	\end{equation}
	In this Koopman mode decomposition, the functions $\phi_j$ are the Koopman eigenfunctions with eigenvalue $\lambda_j$, and the vectors $v_j$ are called the Koopman modes associated with $\mathbf{f}$. However, the expansion above can contain an infinite number of terms. In order to work with (\ref{eq:KoopExpansion}) using finite arithmetic, we must restrict ourselves to a finite dimensional subspace of our original function space.
	
	Let $\mathscr{F}$ be a finite dimensional subspace of $L^2(\mu)$ and $\bar{\mathscr{F}}$ be its orthogonal complement. Let $P_1$ and $P_2$ be the projections on to $\mathscr{F}$ and $\bar{\mathscr{F}}.$ For any function $g \in L_2(\mu)$, we can compute the Koopman evolution as
	\[\K g= P_1\K g +P_2\K g= P_1\K P_1g+ P_2\K P_1g + P_1\K P_2g+P_2\K P_2g.\]
	The operator $P_1\K P_1$ maps $\mathscr{F}$ into itself. For any $g\in \mathscr{F}$, we have $P_2 g=0$, so we can view $P_1\K P_1$ as an approximation of $\K$ provided $\|P_2\K P_1\|$ is small. If $\mathscr{F}$ is an invariant subspace under $\K$, we have $\|P_2\K P_1\|=0$, and $\K g=P_1 \K P_1 g$ for all $g\in \mathscr{F}$. If we let $f_1,f_2,...,f_k$ be a basis for $\mathscr{F}$, we can represent the restriction of $P_1\K P_1$ to $\mathscr{F}$ as a matrix $\mathbf{K}$ that acts on the basis by
	
	\begin{equation}
	\label{eq:finitesub}
	\mathbf{K}\begin{bmatrix}
	f_1 & f_2 & \hdots & f_k
	\end{bmatrix}^T=
	\begin{bmatrix}
	\K f_1 & \K f_2 & \hdots & \K f_k
	\end{bmatrix}^T.
	\end{equation}

	\begin{remark}
		The matrix $K$ can also be thought of as the matrix acting (on the right) on the vector of coefficients of functions represented in the basis $f_1,\hdots,f_k$: for any function $g\in\mathscr{F}$ we can write \[g=\sum_{j=1}^k a_jf_j=\mathbf{a}\begin{bmatrix} f_1 & \hdots & f_k\end{bmatrix}^T,\]
		and $\mathbf{a}=\begin{bmatrix} a_1 & \hdots & a_k\end{bmatrix}$ is the row vector of coefficients of $g$. Then $(\mathbf{a}\mathbf{K})$ is the row vector of coefficients for $\K g$, since
		\[\K g=\K(\mathbf{a}\begin{bmatrix} f_1 & \hdots & f_k\end{bmatrix}^T)=\mathbf{a}\begin{bmatrix} \K f_1 & \hdots & \K f_k\end{bmatrix}=\mathbf{a}\mathbf{K}\begin{bmatrix} f_1 & \hdots & f_k\end{bmatrix}^T\]
	\end{remark}

	Dynamic mode decomposition algorithms compute an approximation of the matrix $\mathbf{K}$ from data. If we can measure the observables $f_1,f_2,...,f_k$ along a trajectory $x_0,x_1,...,x_n$, we can form the vector valued observable $\mathbf{f}:M\to\R^k$ by
	\[\mathbf{f}=
	\begin{bmatrix} 
	f_1 & f_2 & \hdots & f_k
	\end{bmatrix}^T.\] 
	Each $\mathbf{f}(x_t)$ is called a data snapshot. Given a data matrix whose columns are snapshots of $\mathbf{f}$, \[D=\begin{bmatrix}
	\mathbf{f}(x_0) & \mathbf{f}(x_1) & \hdots & \mathbf{f}(x_n)
	\end{bmatrix},\]
	we can construct an operator $A:\R^k\to \R^k$, called the DMD operator, which (approximately) maps each data snapshot to the next one, i.e. \[A\mathbf{f}(x_i)\approx\mathbf{f}(x_{i+1}).\]
	Standard DMD algorithms (see \cite{Schmid},\cite{Williams2015data},\cite{Mezic2020},\cite{Kutz} and the sources therein) construct a matrix $C$ to minimize the error \[\sum_{i=0}^{n-1}\|C\mathbf{f}(x_i)-\mathbf{f}(x_{i+1})\|_2^2.\]

	\noindent\rule{\textwidth}{0.5pt}
	Algorithm 1: Extended Dynamic Mode Decomposition\\
	\noindent\rule{\textwidth}{0.5pt}
	Let $x_0,x_1,...,x_n$ be a trajectory of our random dynamical system and $\mathbf{f}:M\to \C^k$ be a vector valued observable on our system.\\
	1: Construct the data matrices
	\[X=\begin{bmatrix}
	\mathbf{f}(x_0) & \mathbf{f}(x_1) & \hdots & \mathbf{f}(x_{n-1})
	\end{bmatrix},~~~~~~ 
	Y=\begin{bmatrix}
	\mathbf{f}(x_1) & \mathbf{f}(x_2) & \hdots & \mathbf{f}(x_n)
	\end{bmatrix}.\]
	2: Form the matrix 
	\[C=YX^\dagger,\]
	where $X^\dagger$ is the Moore-Penrose psuedoinverse.\\
	3. Compute the eigenvalues and left and right eigenvectors, $(\lambda_i,w_i, v_i)$ $i=1,2,...,k$, of $C$. Then the dynamic eigenvalues are $\lambda_i$, the dynamic modes are $v_i$, and the numerical eigenfunctions are given by 
	\[\hat{\phi}_i=w_i^TX.\]
	\noindent\rule{\textwidth}{0.5pt}
	
	Let $f_1,f_2,...,f_k$ be the components of $\mathbf{f}$. If we let $\hat{f}_i$ be the $i^{th}$ row of $X$,
	\[\hat{f_i}=\begin{bmatrix}
	f_i(x_0) &f_i(x_1) & \hdots & f_i(x_{n-1})
	\end{bmatrix},\]
	we see that $\hat{f}_i$ represents $f_i$ by evaluating it along a trajectory. With standard DMD, we construct the DMD operator $C$ represented in the basis $\hat{f_1},\hat{f_2},...,\hat{f_k}$. Similarly, the numerical eigenfunctions, $\hat{\phi}_i$ will be approximations of eigenfunctions of the stochastic Koopman operator evaluated along our trajectory. Unfortunately, depending on the choice of basis, this DMD construction may be numerically unstable. This leads to the second algorithm \cite{Schmid}.

	\noindent\rule{\textwidth}{0.5pt}
	Algorithm 2: SVD based EDMD\\
	\noindent\rule{\textwidth}{0.5pt}
	Let $x_0,x_1,...,x_n$ be a trajectory of our random dynamical system and, $f_1,f_2,...,f_l,\,l\geq k$, be a set of $l$ observables on our system.\\
	1: Construct the data matrices
	\[X=\begin{bmatrix}
	\mathbf{f}(0) & \mathbf{f}(1) & \hdots & \mathbf{f}(n-1)
	\end{bmatrix},~~~~~~ 
	Y=\begin{bmatrix}
	\mathbf{f}(1) & \mathbf{f}(2) & \hdots & \mathbf{f}(n)
	\end{bmatrix}.\]
	2: Compute the truncated SVD of $X$ using the first $k$ singular values.
	\[X=W_kS_kV_k^*.\]
	3: Form the matrix 
	\[A=S_k^{-1}W_k^*YV_k.\]
	4. Compute the eigenvalues and left and right eigenvectors, $(\lambda_i,w_i, u_i)$ $i=1,2,...,k$, of $A$. Then the dynamic eigenvalues are $\lambda_i$, the dynamic modes are 
	\[v_i=WSu_i,\]
	and the numerical eigenfunctions are given by 
	\[\hat{\phi}_i=w_i^TV_k^*.\]
	\noindent\rule{\textwidth}{0.5pt}
	
	The benefit of SVD based DMD is that it is more numerically stable. If $X$ has a large condition number, the pseudoinversion of $X$ can introduce large errors to the DMD operator and make Algorithm 1 unstable. To combat this, Algorithm 2 computes the SVD of $X$ and truncates to include only the dominant singular values. Since $S_k$ has a smaller condition number than $X$, the inversion of $S_k$ in Algorithm 2 is more numerically stable than the psuedoinversion of $X$. Algorithm 2 uses singular values and vectors to choose a basis of observables to construct the DMD operator; the matrix $A$ generated is the same as the one produced by Algorithm 1 using the $k-$dimensional observable $\mathbf{f}_{new}=S_k^{-1}W^*\mathbf{f}$.
	
	\section{Convergence of DMD for Random Systems}
	
	The utility of Algorithms 1 and 2 comes from the convergence of the dynamic eigenvalues and numerical eigenfunctions to eigenvalues and eigenfunctions of $\K$.
	
	\begin{proposition}
		\label{pr:conv1}
		Let $T$ be an i.i.d. random system with ergodic measure $\mu$. Let $\mathscr{F}$ be a $k$ dimensional subspace of $L^2(\mu)$ which is invariant under the action of $\K$, and let $f_1,f_2,...,f_k$ span $\mathscr{F}$. Let $\lambda_{j,n}$ be the dynamic eigenvalues and $v_{j,n}$ be the dynamic modes produced by Algorithm 1 using the trajectory $x_0,x_1,...,x_n$. Then, as $n\to \infty,$ the dynamic eigenvalues converge to the eigenvalues of $\K$ restricted to $\mathscr{F}$ for almost every initial condition $(x_0,\omega_0)$ with respect to $(\mu\times P)$. If the eigenvalues of $\K$ are distinct, the numerical eigenfunctions converge to a sampling of the eigenfunctions along the trajectory.
	\end{proposition}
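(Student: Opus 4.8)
The plan is to show that the DMD matrix $C=YX^\dagger$ converges, entrywise as $n\to\infty$, to the matrix $\mathbf{K}$ defined in (\ref{eq:finitesub}), and then to extract convergence of eigenvalues and eigenvectors by continuity of the spectral data. The starting observation is purely algebraic: since $\mathscr{F}$ is $\K$-invariant, we may write $\K f_p=\sum_r \mathbf{K}_{pr}f_r$, so that $\K\mathbf{f}=\mathbf{K}\mathbf{f}$ pointwise and the eigenvalues of $\mathbf{K}$ are exactly the eigenvalues of $\K$ restricted to $\mathscr{F}$. Thus it suffices to prove $C\to\mathbf{K}$.

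Next I would compute the limits of the two empirical moment matrices using the ergodic average (\ref{eq:Ergodic}). Because the columns of $X$ and $Y$ are $\mathbf{f}(x_i)$ and $\mathbf{f}(x_{i+1})$, we have $\tfrac1n XX^*=\tfrac1n\sum_{i=0}^{n-1}\mathbf{f}(x_i)\mathbf{f}(x_i)^*$ and $\tfrac1n YX^*=\tfrac1n\sum_{i=0}^{n-1}\mathbf{f}(x_{i+1})\mathbf{f}(x_i)^*$. The $(p,q)$ entry of the first is a time average of $f_p(x_i)\overline{f_q(x_i)}$, which converges to $\int_M f_p\overline{f_q}\,d\mu=G_{pq}$, the Gram matrix $G$. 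For the second, I apply Lemma \ref{le:ergodic} to the two-point function $\hat f(u_0,u_1)=f_p(u_1)\overline{f_q(u_0)}$ (with $t_1=0,\ t_2=1$), giving the limit $\int_M\int_\Omega f_p(T_\omega x)\overline{f_q(x)}\,dP\,d\mu=\int_M \K f_p\,\overline{f_q}\,d\mu=\langle \K f_p,f_q\rangle$, where the middle equality is just Definition \ref{def:StochKoop}. Substituting $\K f_p=\sum_r \mathbf{K}_{pr}f_r$ shows this limit equals $(\mathbf{K}G)_{pq}$. The reduction of the mixed time correlation to $\langle\K f_p,f_q\rangle$ is the single place where the structure of the stochastic Koopman operator genuinely enters the argument.

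To finish the eigenvalue claim, I would note that $f_1,\dots,f_k$ are linearly independent in $L^2(\mu)$, so $G$ is positive definite; hence for $n$ large, $\tfrac1n XX^*$ is invertible, $X$ has full row rank, and $X^\dagger=X^*(XX^*)^{-1}$. Therefore $C=\bigl(\tfrac1n YX^*\bigr)\bigl(\tfrac1n XX^*\bigr)^{-1}\to(\mathbf{K}G)G^{-1}=\mathbf{K}$, and since the roots of the characteristic polynomial depend continuously on the matrix entries, $\lambda_{j,n}\to$ the eigenvalues of $\K|_{\mathscr{F}}$. For the eigenfunctions, distinctness of the eigenvalues makes each eigenprojection of $\mathbf{K}$ simple, so the left eigenvectors $w_{j,n}$ of $C$ (suitably normalized) converge to left eigenvectors $w_j$ of $\mathbf{K}$. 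By the coefficient interpretation noted after (\ref{eq:finitesub}), a left eigenvector $w_j$ of $\mathbf{K}$ is precisely the coefficient vector of a Koopman eigenfunction $\phi_j=\sum_p (w_j)_p f_p$; hence the numerical eigenfunction $\hat\phi_j=w_{j,n}^T X$ has $i$-th entry $w_{j,n}^T\mathbf{f}(x_i)\to\phi_j(x_i)$, which is exactly $\phi_j$ sampled along the trajectory.

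The main obstacle I anticipate is not the ergodic computation, which is routine once Lemma \ref{le:ergodic} is in hand, but the eigenfunction convergence: matrix convergence $C\to\mathbf{K}$ gives eigenvalue convergence automatically, yet eigenvector convergence is delicate and genuinely requires the distinct-eigenvalue hypothesis, together with a consistent normalization so that the $w_{j,n}$ do not drift in phase or scale. I would therefore be most careful in invoking continuity of the spectral projections at simple eigenvalues and in fixing the normalization of $w_{j,n}$ so that the limiting sampled eigenfunction is well defined.
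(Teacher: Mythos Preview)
Your proposal is correct and follows essentially the same route as the paper: form the empirical Gram matrices $\tfrac1n XX^*$ and $\tfrac1n YX^*$, use Lemma~\ref{le:ergodic} to show they converge to $G_0=\int_M\mathbf{f}\mathbf{f}^*\,d\mu$ and $G_1=\mathbf{K}G_0$, invoke invertibility of $G_0$ from linear independence of the $f_i$, and conclude $C\to\mathbf{K}$ with eigenfunction convergence following from the distinct-eigenvalue hypothesis. Your treatment of the eigenvector step is in fact slightly more careful than the paper's, which simply asserts $w_{j,n}\to w_j$ without discussing normalization.
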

	The proof of Proposition \ref{pr:conv1} is fairly standard in the DMD literature (e.g. \cite{Williams2015data}) and does not differ from the deterministic case, but we include it for completeness.
	
	\begin{proof}
		Let $f_1,f_2,...,f_k$, and $\mathbf{K}$ be as described in (\ref{eq:finitesub}). Let $X_n$, $Y_n$, and $C_n$ be the matrices produced by Algorithm 1 for the trajectory $x_0,x_1,...,x_n$, and let $\omega_0,\omega_1,...,\omega_n$ be the evolution of the noise.
		Let $\mathbf{f}=
		\begin{bmatrix} f_1 & f_2 &\hdots& f_k\end{bmatrix}^T$ as above. Define the matrices 
		
		\[G_0=\int_M
		\begin{bmatrix}
		f_1 & f_2 & \hdots & f_k
		\end{bmatrix}^T
		\begin{bmatrix}
		f_1^* & f_2^* & \hdots & f_k^*
		\end{bmatrix}d\mu=\int_M \mathbf{f}\,\mathbf{f}^*\,d\mu\]
		and
		\[G_1=\int_M
		\begin{bmatrix}
		\K f_1 & \K f_2 & \hdots & \K f_k
		\end{bmatrix}^T
		\begin{bmatrix}
		f_1^* & f_2^* & \hdots & f_k^*
		\end{bmatrix}d\mu=\int_M \mathbf{K}\,\mathbf{f}\,\mathbf{f}^*\, d\mu =\mathbf{K}G_0.\]
		We can see that $G_0$ has full rank, since if $\mathbf{v}$ was in its nullspace we would have 
		\[\|\mathbf{f}^*\mathbf{v}\|^2=\mathbf{v}^*G_0\mathbf{v}=0,\]
		which implies $\mathbf{v}=0$ since $f_1,f_2,...,f_k$ are linearly independent. This gives us $\mathbf{K}=G_0^{-1}G_1$.\\
		
		Now, let $G_{0,n}=\frac{1}{n}X_nX_n^*$ and $G_{1,n}=\frac{1}{n}X_nY_n^*$. 
		We have $G_{0,n}\to G_0$ and $G_{1,n}\to G_1$ for almost every initial condition $(x_0,\omega_0)$. To see this, by Lemma \ref{le:ergodic} we have
		
		\begin{align*}
		\lim_{n\to\infty}G_{1,n}&=\lim_{n\to\infty} \frac{1}{n}\sum_{m=0}^{n-1} \mathbf{f}(x_{m+1})\mathbf{f}^*(x_{m})=\lim_{n\to\infty} \frac{1}{n}\sum_{m=0}^{n-1} \mathbf{f}(T_{\omega_m} x_{m})\mathbf{f}^*( x_{m})\\
		&=\int_M\int_P \mathbf{f}(T_\omega x)\mathbf{f}^*(x)\,dPd\mu=\int_M \mathbf{K}\,\mathbf{f}(x)\,\mathbf{f}^*(x)\,d\mu=G_1,
		\end{align*}
		and similarly for $G_0$, we have
		
		\[\lim_{n\to\infty}G_{0,n}=\lim_{n\to\infty} \frac{1}{n}\sum_{m=0}^{n-1} \mathbf{f}(x_{m})\mathbf{f}^*(x_{m})=\int_M \int_\Omega \mathbf{f}(x)\mathbf{f}^*(x)
		\,dPd\mu=G_0.\]
		
		Since $G_0$ has full rank and $G_{0,n}\to G_0$, $G_{0,n}$ is full rank for $n$ large enough, so $G_{0,n}^{-1}$ exists and 
		\[\lim_{n\to\infty}G_{0,n}^{-1}G_{1,n}=G_0^{-1}G_1=\mathbf{K}.\]
		Because $G_{0,n}=\frac{1}{n}X_nX_n^*$, we know $X_n$ has full row rank for $n$ large enough, so \[C_n=Y_n(X_n)^\dagger =Y_nX_n^*(X_nX_n^*)^{-1}=\left(\frac{1}{n}Y_nX_n^*\right)\left(\frac{1}{n}X_nX_n^*\right)^{-1}=G_{0,n}^{-1}G_{1,n},\]
		which shows that $C_n\to\mathbf{K}$. This shows that the dynamic eigenvalues, $\lambda_{j,n}$, converge to the eigenvalues of $\mathbf{K}$, $\lambda_j$, as $n\to \infty$. 
		
		To show the numerical eigenfunctions converge to samplings of our eigenfunctions, let $w_{j,n}$ and $w_j$ be the left eigenvectors of $C_n$ and $\mathbf{K}$, respectively. Consider the functions $\phi_{j,n}=w_{j,n}^T\mathbf{f}$ and $\phi_j=w_j^T\mathbf{f}$. We know $\phi_j$ is a Koopman eigenfunction, since
		\[\K\phi_j=\K(w_j^T\mathbf{f})=w_j^T\mathbf{K}\,\mathbf{f}=\lambda_jw_j^T\mathbf{f}=\lambda_j\phi_j.\]
		If $\mathbf{K}$ has distinct eigenvalues, the vectors $w_{j,n}$ each converge to $w_j$, so $\phi_{j,n}\to\phi_j$. The numerical eigenfunctions, $\hat{\phi}_{j,n},$ are the values of the function $\phi_{j,n}$ sampled along the trajectory $x_0,...,x_{n-1}$.
	\end{proof}
	
	The convergence of Proposition \ref{pr:conv1} is based on the convergence of time averages to inner products of functions in $L^2(\mu)$. In particular, the $i,j^{th}$ entry of $G_{0,n}$ and $G_{1,n}$ converge to $\langle f_i,f_j\rangle$ and $\langle \K f_i,f_j\rangle$, respectively, where $\langle\cdot,\cdot\rangle$ is the $L^2(\mu)$ inner product. As such, we cannot glean any information about dynamics outside the support of $\mu$. There could be an eigenvalue/eigenfunction pair, $(\lambda,\phi)$, such that $\phi$ is zero on the support of $\mu$. Such a pair cannot be captured by Algorithm 1, since $\phi=0$ almost everywhere with respect to $\mu$. In particular, if $\mu$ is a singular measure concentrated on some attractor, the eigenvalues governing the dissipation to the attractor cannot be found using ergodic sampling. 
	
	\section{DMD with Noisy Observables}
	\subsection{Preliminaries}
	The proof above shows that Dynamic Mode Decomposition converges for random dynamical systems with i.i.d. dynamics. However, it is important to note that although the systems can have randomness, the observables cannot. The stochastic Koopman operator acts on functions, $f:M\to \C$, which depend only on the state of the system. If we allow our observables to have some noise (i.e. dependence on $\omega$), the proof fails. In particular, observables with i.i.d. measurement noise and time delayed observables (used in Hankel DMD) both have some dependence on $\omega$, and therefore cannot be used with the above DMD methods. 
	
	Examining the failure of standard DMD with noisy observables is instructive. First we must define our requirements for ``noisy observables.''	
	\begin{definition}
		\label{def:noisyobs}
		A noisy observable is a measurable map $\tilde{f}:M\times\Omega \to \C$, such that the random function $\tilde{f}_\omega=\tilde{f}(\,\cdot\,,\omega):M\to \C$ is $\mathfrak{B}$ measurable for almost every $\omega$.
	\end{definition} 
	For notation, we will always denote a noisy observable, $\tilde{f}$, with a tilde and let the space of noisy observables be $\mathscr{H}$. We will also define $f$ to be its mean:
	\[f(x)=\int_\Omega \tilde{f}_\omega(x) dP.\]
	
	With these definitions, we can interpret $f$ as the ``true'' observable on the system, whereas $\tilde{f}$ is the ``measured'' observable, which comes with some degree of uncertainty. We are interested in the evolution of $f$ rather than $\tilde{f}$, since it depends only on the evolution in the state space and not the noise. Computing the DMD operator with the evolution of $\tilde{f}$ can fit the model to the noise and give a poor approximation of the system.
	In what follows, we will assume that $f$ exists and is in $L^2(\mu)$. To avoid some clutter in the equations and algorithms, we will also denote the time samples of an observable with a hat: $\hat{f}(t)=\tilde{f}(x_t,\omega_t)$.

	In order to evaluate the stochastic Koopman evolution of $f$, we will need to place further restrictions on $\tilde{f}$. We need the random function $\tilde{f}_{\omega}$ to be independent from the past of the dynamics. Precisely, we require that $\tilde{f}_{\theta_t\omega}$ is independent of $T_{\theta_t\omega}$ for all $s<t$ (which implies $\tilde{f}_{\omega_t}$ is independent of $T_{\omega_s}$ for all $s<t$ for any sample path). Roughly speaking, this means the random function $\tilde{f}_{\omega_t}$ cannot be predicted by the past of the dynamics on $M$. The independence condition gives us
	
	\begin{equation}
	\label{eq:NoisyAve}
	\int_\Omega \tilde{f}_{\theta_j\omega}(T_{\omega}^jx)dP(\omega)=\int_\Omega \int_\Omega \tilde{f}_\psi(T_{\omega}^jx)dP(\psi)dP(\omega)=\int_\Omega f(T_{\omega}^jx)dP=\K^jf(x).
	\end{equation}

	Finally, in order to approximate integrals from data, we will need some ergodicity assumptions on our noisy observables. Namely, we will need time averages to converge in a similar sense to Lemma \ref{le:ergodic}. In particular, we will need
	\begin{equation}
	\label{eq:noisyerg}
	\lim_{n\to\infty}\frac{1}{n} \sum_{j=0}^{n-1} \hat{\mathbf{f}}(t+j)\hat{\mathbf{g}}(t)=\int_M\int_\Omega \tilde{\mathbf{f}}_{\theta_j\omega}(T_{\omega}^jx)\tilde{\mathbf{g}}_\omega(x)dP(\omega)d\mu(x),
	\end{equation}
	for two vector valued noisy observables $\tilde{\mathbf{f}}$ and $\tilde{\mathbf{g}}$ and almost every initial condition $(x_0,\omega_0)$.

	\begin{remark}
		While we make the ergodicity assumption for generality, we will show that (\ref{eq:noisyerg}) holds for observables with i.i.d. measurement noise and time delayed observables, the primary observables of interest in this paper. More generally, we can consider the skew product system $\Theta$ on $M\times \Omega$ given by $\Theta(x,\omega)=(T_\omega x, \theta\omega)$ and treat $\tilde{f}$ as an observable on $M\times \Omega$. If $\mu\times P$ is an ergodic measure for $\Theta$, we can evaluate time averages as in (\ref{eq:noisyerg}).
	\end{remark}

	\subsection{Failure of Dynamic Mode Decomposition with Noisy Observables}
	Now, using the ergodicity (\ref{eq:noisyerg}) and independence (\ref{eq:NoisyAve}) assumptions above, we can see exactly how DMD fails. The convergence of DMD depends largely on estimation of inner products using time averages.	
	As before let $f_1,...,f_k$ be observables which span a $k$-dimensional subspace $\mathscr{F}$, and let $\mathbf{K}$ be the restriction of $\K$ to $\mathscr{F}$ as in (\ref{eq:finitesub}). Let $\mathbf{f}=\begin{bmatrix}
	f_1 & \hdots & f_k
	\end{bmatrix}^T$. We have from Lemma \ref{le:ergodic} that 
	\[G_j=\lim_{n\to\infty}\frac{1}{n}\sum_{m=0}^{n-1} \mathbf{f}(x_{m+j})\mathbf{f}^*(x_{m})=\int_M\int_\Omega \mathbf{f}(T_\omega^jx)\mathbf{f}^*(x)dPd\mu=\int_M \mathbf{K}^j\mathbf{f}\,\mathbf{f}^*\,d\mu.\]
	We can use the fact that $G_j=\mathbf{K}G_{j-1}$ to estimate $\mathbf{K}$.
	However, suppose we have a noisy observable $\tilde{f}\in \mathscr{H}$ with $\E_P(\tilde{\mathbf{f}}_\omega)=\mathbf{f}$ such that (\ref{eq:noisyerg}) and (\ref{eq:NoisyAve}) hold. When we take the comparable time average, we have
	\[\tilde{G}_j=\lim_{n\to\infty}\frac{1}{n}\sum_{m=0}^{n-1}\hat{\mathbf{f}}(m+j)\hat{\mathbf{f}}(m)^*=\int_M \int_\Omega \tilde{\mathbf{f}}_{\theta_j\omega}(T_\omega^jx)\tilde{\mathbf{f}}^*_{\omega}(x)dPd\mu.\]
	This is not equal to $G_j$, since $\tilde{\mathbf{f}}_\omega$ and $\tilde{\mathbf{f}}_{\theta_j\omega}\circ T_\omega^j$ are not necessarily independent. In fact, if we examine the difference in $\tilde{G}_j$ and $G_j$, we obtain
	\[\tilde{G}_j-G_j
	=\int_M Cov(\tilde{\mathbf{f}}_{\theta_j\omega}\circ T_\omega^j,\tilde{\mathbf{f}}_\omega) d\mu,\]
	where $Cov(\tilde{\mathbf{f}}_{\theta_j\omega}\circ T_\omega^j,\tilde{\mathbf{f}}_\omega)(x)$ denotes the covariance of $\tilde{\mathbf{f}}_{\theta_j\omega}(T_\omega^tx)$ and $\tilde{\mathbf{f}}_\omega(x)$, since, using (\ref{eq:NoisyAve}),
	
	\[Cov(\tilde{\mathbf{f}}_{\theta_j\omega}\circ T_\omega^j,\tilde{\mathbf{f}}_\omega)(x)=\E_P(\tilde{\mathbf{f}}_{\theta_j \omega}(T_\omega^j x)\tilde{\mathbf{f}}^*_\omega(x))-\E_P(\tilde{\mathbf{f}}_{\theta_j \omega}(T_\omega^j x))\E_P(\tilde{\mathbf{f}}^*(x))\]
	\[=\E_P(\tilde{\mathbf{f}}_{\theta_j \omega}(T_\omega^j x)\tilde{\mathbf{f}}_\omega(x)^*)-\mathbf{K}^j\mathbf{f}\,\mathbf{f}^*.\]

	Since Algorithms 1 and 2 depend on the numerical approximations of $G_j$, we can conclude that the error stems from the covariances of the observables. However, if we could somehow guarantee this covariance was zero, we could still compute $\mathbf{K}$. We will do this by choosing a second set of observables, $\tilde{\mathbf{g}}_\omega$, such that $Cov(\tilde{\mathbf{f}}_{\theta_j\omega}\circ T_\omega^j,\tilde{\mathbf{g}}_\omega)=0$. We can guarantee this by ensuring $\tilde{\mathbf{g}}$ meets some independence conditions with $T_\omega$ and $\tilde{\mathbf{f}}$. This brings us to our third algorithm, which gives us the freedom to choose $\tilde{\mathbf{g}}$.

	\subsection{Noise Resistant DMD Algorithms}$~$\\
	\noindent\rule{\textwidth}{0.5pt}
	Algorithm 3: Noise Resistant DMD\\
	\noindent\rule{\textwidth}{0.5pt}
	Let $\tilde{\mathbf{f}}\in \mathscr{H}^k$, and $\tilde{\mathbf{g}}\in \mathscr{H}^l,~l\geq k$. As before, let $\hat{\mathbf{f}}(t)=\tilde{\mathbf{f}}(x_t,\omega_t)$ and $\hat{\mathbf{g}}(t)=\tilde{\mathbf{g}}(x_t,\omega_t)$ denote their samples along a trajectory at time $t$.\\
	1: Construct the data matrices	
	\[X=\begin{bmatrix}
	\hat{\mathbf{f}}(0) & \hat{\mathbf{f}}(1) & \hdots & \hat{\mathbf{f}}(n-1)
	\end{bmatrix},\]
	\[Y=\begin{bmatrix}
	\hat{\mathbf{f}}(1) & \hat{\mathbf{f}}(2) & \hdots & \hat{\mathbf{f}}(n)
	\end{bmatrix},\]
	and
	\[Z=\begin{bmatrix}
	\hat{\mathbf{g}}(0) & \hat{\mathbf{g}}(1) & \hdots & \hat{\mathbf{g}}(n-1)
	\end{bmatrix}.\]
	2: Form the matrices $\tilde{G}_{0}=\frac{1}{n}XZ^*$ and $\tilde{G}_{1}=\frac{1}{n}YZ^*$.\\
	3: Compute the matrix
	\[C= \tilde{G}_{1}\tilde{G}_{0}^\dagger.\]
	4: Compute the eigenvalues and left and right eigenvectors, $(\lambda_i,w_i,v_i)$ of $C$. The dynamic eigenvalues are $\lambda_i$, the dynamic modes are $v_i$, and the numerical eigenfunctions are given by
	\[\hat{\phi}_i=w_i^T X.\]
	\noindent\rule{\textwidth}{0.5pt}
	
	The idea behind Algorithm 3 is to use a second noisy observable, $\tilde{\mathbf{g}}$, which meets some independence requirements with $\tilde{\mathbf{f}}$, to generate a second basis for $\mathscr{F}$. If $\tilde{\mathbf{g}}$ meets the proper independence requirements, the convergence can be shown in a similar manner to Proposition \ref{pr:conv1}.
	
	\begin{proposition}
		\label{pr:conv2}
		Let $\tilde{\mathbf{f}}\in\mathscr{H}^k$ and $\tilde{\mathbf{g}}\in\mathscr{H}^l$ be such that $\tilde{\mathbf{f}}$ and $\tilde{\mathbf{g}}$ satisfy (\ref{eq:noisyerg}) and $\tilde{\mathbf{f}}$ satisfies (\ref{eq:NoisyAve}). Suppose $\tilde{\mathbf{g}}_{\omega}$ is independent of $\tilde{\mathbf{f}}_{\omega}$,  $\tilde{\mathbf{f}}_{\theta\omega}$, and $T_{\omega}$. Define $\mathbf{f}(x)=\E_\Omega(\tilde{\mathbf{f}}_\omega(x))$ and $\mathbf{g}(x)=\E_\Omega(\tilde{\mathbf{g}}_\omega(x)).$ Suppose the components of $\mathbf{f}$, $f_1,...,f_k$, span a $k$-dimensional invariant subspace, $\mathscr{F}$, of $\K$ and $\mathscr{F}\subset \text{span}\{g_1,...,g_l\},$ where $g_1,...,g_l$ are the components of $\mathbf{g}$. Then the matrix $C$ generated by Algorithm 3 converges to the restriction of $\K$ to $\mathscr{F}$ as $n\to \infty$.
	\end{proposition}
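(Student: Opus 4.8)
The plan is to mirror the structure of the proof of Proposition~\ref{pr:conv1}: first identify the limits of the two cross-covariance matrices built in Algorithm~3, and then show that their product $C=\tilde G_1\tilde G_0^\dagger$ converges to $\mathbf{K}$, the matrix of $\K|_{\mathscr{F}}$ defined in (\ref{eq:finitesub}). Writing the matrices produced in Step~2 as the time averages $\tilde G_0=\frac1n\sum_{m=0}^{n-1}\tilde{\mathbf{f}}(m)\tilde{\mathbf{g}}^*(m)$ and $\tilde G_1=\frac1n\sum_{m=0}^{n-1}\tilde{\mathbf{f}}(m+1)\tilde{\mathbf{g}}^*(m)$, I would apply the ergodicity hypothesis (\ref{eq:noisyerg}) entrywise to obtain, for almost every $(x_0,\omega_0)$,
\[
\tilde G_0\longrightarrow\int_M\int_\Omega \tilde{\mathbf{f}}_\omega(x)\,\tilde{\mathbf{g}}_\omega^*(x)\,dP\,d\mu,
\qquad
\tilde G_1\longrightarrow\int_M\int_\Omega \tilde{\mathbf{f}}_{\theta_1\omega}(T_\omega x)\,\tilde{\mathbf{g}}_\omega^*(x)\,dP\,d\mu.
\]

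Next I would use the independence hypotheses to simplify these two limits. For $\tilde G_0$, since $\tilde{\mathbf{g}}_{\omega_t}$ is independent of $\tilde{\mathbf{f}}_{\omega_t}$, the inner $P$-integral factors as $\mathbf{f}(x)\mathbf{g}^*(x)$, so $\tilde G_0\to G_{\mathbf{f}\mathbf{g}}:=\int_M\mathbf{f}\,\mathbf{g}^*\,d\mu$. For $\tilde G_1$, the time-zero integrand is $\tilde{\mathbf{f}}_{\omega_1}(T_{\omega_0}x)\,\tilde{\mathbf{g}}_{\omega_0}^*(x)$; because $\tilde{\mathbf{g}}_{\omega_0}$ is (jointly) independent of $\tilde{\mathbf{f}}_{\omega_1}$ and $T_{\omega_0}$, the $P$-integral factors as $\bigl(\int_\Omega\tilde{\mathbf{f}}_{\theta_1\omega}(T_\omega x)\,dP\bigr)\mathbf{g}^*(x)$. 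The first factor equals $\K\mathbf{f}(x)$ by (\ref{eq:NoisyAve}) (using $\tilde{\mathbf{f}}\in\mathscr{H}_+^k$), and since $\mathscr{F}$ is $\K$-invariant and $f_1,\dots,f_k$ span it, this is $\mathbf{K}\mathbf{f}(x)$. Hence $\tilde G_1\to\mathbf{K}\int_M\mathbf{f}\,\mathbf{g}^*\,d\mu=\mathbf{K}G_{\mathbf{f}\mathbf{g}}$.

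The crux of the argument is then to show that the $k\times l$ matrix $G_{\mathbf{f}\mathbf{g}}$ has full row rank $k$, and this is exactly where the containment hypothesis $\mathscr{F}\subset\mathrm{span}\{g_1,\dots,g_l\}$ is used. If $\mathbf{v}^*G_{\mathbf{f}\mathbf{g}}=0$, then the function $\mathbf{v}^*\mathbf{f}\in\mathscr{F}$ is $L^2(\mu)$-orthogonal to every $g_j$, hence to all of $\mathrm{span}\{g_j\}\supseteq\mathscr{F}$; being orthogonal to itself it must vanish, and linear independence of $f_1,\dots,f_k$ forces $\mathbf{v}=0$. Thus $G_{\mathbf{f}\mathbf{g}}$ has rank $k$, so $G_{\mathbf{f}\mathbf{g}}G_{\mathbf{f}\mathbf{g}}^\dagger=I_k$. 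Since the Moore--Penrose pseudoinverse is continuous on the stratum of full-rank matrices, $\tilde G_0$ is eventually rank $k$ and $\tilde G_0^\dagger\to G_{\mathbf{f}\mathbf{g}}^\dagger$, giving
\[
C=\tilde G_1\tilde G_0^\dagger\longrightarrow \mathbf{K}\,G_{\mathbf{f}\mathbf{g}}G_{\mathbf{f}\mathbf{g}}^\dagger=\mathbf{K}.
\]
Convergence of the dynamic eigenvalues, modes, and numerical eigenfunctions then follows verbatim as in Proposition~\ref{pr:conv1}.

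The two ergodic limits are routine given (\ref{eq:noisyerg}). I expect the delicate points to be two. First, the factorization of the $\tilde G_1$ integrand really requires $\tilde{\mathbf{g}}_{\omega_t}$ to be independent of the \emph{pair} $(\tilde{\mathbf{f}}_{\omega_{t+1}},T_{\omega_t})$, whereas the statement lists three separate independences; I would interpret and invoke the joint version, which is what makes $\E_\Omega$ split into a product. Second, and most importantly, the rank/continuity step is what actually delivers $\mathbf{K}$ rather than $\mathbf{K}$ composed with a projection: without full row rank of $G_{\mathbf{f}\mathbf{g}}$ the factor $G_{\mathbf{f}\mathbf{g}}G_{\mathbf{f}\mathbf{g}}^\dagger$ is only the orthogonal projector onto its range, and $C$ would fail to converge to $\mathbf{K}$. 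Tying the spanning hypothesis to full rank and justifying $\tilde G_0^\dagger\to G_{\mathbf{f}\mathbf{g}}^\dagger$ via continuity of the pseudoinverse is the part I expect to require the most care.
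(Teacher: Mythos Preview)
Your proposal is correct and follows essentially the same approach as the paper's own proof: identify the ergodic limits $G_0=\int_M\mathbf{f}\,\mathbf{g}^*\,d\mu$ and $G_1=\mathbf{K}G_0$ via the independence hypotheses and (\ref{eq:NoisyAve}), show $G_0$ has full row rank using $\mathscr{F}\subset\mathrm{span}\{g_j\}$, and conclude $C\to\mathbf{K}$ by continuity of the pseudoinverse. Your elaboration of the rank argument (orthogonality to $\mathrm{span}\{g_j\}\supseteq\mathscr{F}$ forcing self-orthogonality) and your remarks on joint independence and pseudoinverse continuity are more explicit than the paper's treatment but add nothing new.
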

	
	\begin{proof}
		Let $\mathbf{K}$ be the restriction of $\K$ to $\mathscr{F}$. Let $\tilde{G}_{0,n}$ and $\tilde{G}_{1,n}$ be the matrices generated in Algorithm 3 with $n$ data points. Using the independence conditions on $\tilde{\mathbf{g}}$, $\tilde{\mathbf{f}}$, and $T_\omega$ and (\ref{eq:NoisyAve}), define
		\begin{equation}
		\label{eq:TimeShiftAve}
		G_0=\int_M\int_\Omega\tilde{\mathbf{f}}_{\omega}(x)\tilde{\mathbf{g}}^*_{\omega}(x)dPd\mu=\int_M\int_\Omega\tilde{\mathbf{f}}_{\omega}(x)dP\int_\Omega\tilde{\mathbf{g}}^*_{\omega}(x)dPd\mu=\int_M \mathbf{f}\,\mathbf{g}^* d\mu
		\end{equation}
		and
		\begin{equation}
		\label{eq:TimeShiftAve2}
		G_1=\int_M\int_\Omega \tilde{\mathbf{f}}_{\theta\omega}(T_{\omega}x)\tilde{\mathbf{g}}^*_{\omega}(x)dPd\mu= \int_M\int_\Omega\tilde{\mathbf{f}}(T_{\omega}x)dP\int_\Omega\tilde{\mathbf{g}}^*_{\omega}(x)dPd\mu=\mathbf{K}\int_M \mathbf{f}\,\mathbf{g}^*d\mu.
		\end{equation}
		We can show that $G_0$ has full row rank; if $\mathbf{v}$ is in its left nullspace, we would have
		\[\langle \mathbf{v}^T\mathbf{f},g_{i}\rangle=0\]
		for each $i$, which shows $\mathbf{v}=0$ since $\mathscr{F}\subset \text{span}\{g_i\}.$ This gives us $\mathbf{K}= G_{1}G_0^\dagger$. We will show that $\tilde{G}_{0,n}\to G_0$ and $\tilde{G}_{1,n}\to G_{1}$ as $n\to \infty.$ Taking the limit of $G_{0,n}$ with (\ref{eq:noisyerg}) and using (\ref{eq:TimeShiftAve}), we have
		\[\lim_{n\to\infty}\tilde{G}_{0,n}=\lim_{n\to\infty}\frac{1}{n}\sum_{m=0}^{n-1}\hat{\mathbf{f}}(m)\hat{\mathbf{g}}^*(m)=\int_M\int_\Omega \tilde{\mathbf{f}}_{\omega}(x)\tilde{\mathbf{g}}^*_{\omega}(x)\,dPd\mu=G_0\]
		and similarly $\tilde{G}_{1,n}\to G_{1}$ using (\ref{eq:TimeShiftAve2}).
		Since $G_0$ has full rank and $\tilde{G}_{0,n}\to G_0$, we have $\tilde{G}_{0,n}^\dagger\to G_0^\dagger$, so $\tilde{G}_{1,n}\tilde{G}_{0,n}^\dagger\to \mathbf{K}$.
	\end{proof}
	
	It follows from Proposition \ref{pr:conv2} that the eigenvalues and eigenvectors of $C$ go to those of $\mathbf{K}$. Therefore, the dynamic eigenvalues limit to Koopman eigenvalues. The numerical eigenfunctions, however, are more complicated. If $w_i$ is a left eigenvector of $\mathbf{K}$, we have $w_i^T\mathbf{f}$ is a Koopman eigenfunction. The numerical eigenfunctions, however, limit to $w_i^TX$, which a sampling of $w_i^T\tilde{\mathbf{f}}$. In this regard, the numerical eigenfunction is a sampling of an eigenfunction with some zero mean noise added to it.
	
	The key idea in the proof of Proposition \ref{pr:conv2} is the assumption that we have a second observable $\tilde{\mathbf{g}}$ that is uncorrelated with $\tilde{\mathbf{f}}$. This allows us to estimate the inner product of $\mathbf{g}$ and $\mathbf{f}$ using time averages without introducing a covariance term. We call $\tilde{\mathbf{g}}$ our ``dual observable'' since we are using it to evaluate these inner products. While the necessity of a second observable may seem restrictive, Proposition \ref{pr:conv2} allows us to work with very general observables. If we specialize to more specific classes of observables, we will find that we often do not need a second observable. Often, we can use time delays of single observable $\tilde{\mathbf{f}}$ so that $\tilde{\mathbf{f}}_{\omega}$ and $\tilde{\mathbf{f}}_{\theta^s\omega}$ are independent.
	
	\subsection{Observables with i.i.d. Measurement Noise}
	Often, when measuring an observable on a system, the measurement will be imprecise. The error in the measurement are often modeled as an i.i.d. random variable. We call an observable with this type of noise an observable with measurement noise:
	\begin{definition}
		A noisy observable, $\tilde{f}$, is an observable with i.i.d. measurement noise if $\tilde{f}_{\theta_t}\omega$ is an i.i.d. random function and is independent of the random maps $T_{\theta_s\omega}$ for all $s$.
	\end{definition}
	\noindent Let $f=\E_P(\tilde{f}_\omega)$. We note that for any given $\omega$, the measurement error,
	\[\tilde{e}_\omega(x)=\tilde{f}_{\omega}(x)-f(x),\]
	can vary over the state space $M$; it does not need to be a constant additive noise. Since $\tilde{f}_{\omega_t}$ is an i.i.d. random variable and independent of $T_{\omega_t}$ for all $t$, the ordered pair $(x_t,\tilde{f}_{\omega_t})\in M\times L^2(M)$ is an ergodic process, with ergodic measure $\nu=\mu\times\tilde{f}_*(P)$, where $\tilde{f}_*(P)$ is the pushforward of $P$. This allows us to evaluate the time averages as in (\ref{eq:noisyerg}). The proof of this follows from the lemma below and the fact that i.i.d. processes are mixing (\cite{Gikhman}, Theorem 4, page 143).

	\begin{lemma}
		\label{le:iiderg}
		Let $x_t$ and $y_t$ be independent stationary processes. If $x_t$ is ergodic and $y_t$ is mixing, then $(x_t,y_t)$ is ergodic.
	\end{lemma}
	\begin{proof}
		The result follows from Theorem 6.1 on page 65 of \cite{PetersonErgodic}, where we can represent the processes as a measure preserving shifts on the space of sequences of $x_t$ and $y_t$ (\cite{PetersonErgodic}, page 6). 
	\end{proof}

	If the components of $\tilde{\mathbf{f}}$ are observables with measurement noise, it turns out we don't need second observable to use in Algorithm 3. Instead, we can use a time shift of $\tilde{\mathbf{f}}$ to generate $\tilde{\mathbf{g}}$. The i.i.d. property of $\tilde{\mathbf{f}}$ will give us the independence properties we need.
	
	\begin{corollary}
		\label{cor:measnoise}
		Suppose $\tilde{\mathbf{f}}$ is a vector valued observable with i.i.d. measurement noise, and the components of $\mathbf{f}=\E_P(\tilde{\mathbf{f}}_\omega)$ span a $k$-dimensional invariant subspace, $\mathscr{F}$. Suppose further that the restriction of $\K$ to $\mathscr{F}$ has full rank. Then Algorithm 3 converges setting $\hat{\mathbf{g}}(t)=\hat{\mathbf{f}}(t-1)$.
	\end{corollary}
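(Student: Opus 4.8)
The plan is to mirror the proof of Proposition \ref{pr:conv2}, using the delayed copy $\tilde{\mathbf{g}}(t)=\tilde{\mathbf{f}}(t-1)$ as the dual observable, but to carry out the limit computation directly, since the one-step delay shifts every correlation by an extra application of $\K$. First I would check that the ergodicity hypothesis (\ref{eq:noisyerg}) is genuinely available here. By the discussion preceding Lemma \ref{le:iiderg}, i.i.d. measurement noise makes the augmented process $(x_t,\tilde{\mathbf{f}}_{\omega_t})$ an ergodic stationary process (i.i.d. $\Rightarrow$ mixing, then apply Lemma \ref{le:iiderg}). Each entry of $\tilde{G}_{0,n}$ and $\tilde{G}_{1,n}$ is a time average of a function of finitely many consecutive coordinates of this augmented process, so the ergodic theorem (\ref{eq:Ergodic}) applies and the time averages converge to the corresponding integrals against $\mu\times P$.

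Next I would compute the two limiting matrices. Writing $G_*=\int_M \mathbf{f}\,\mathbf{f}^*\,d\mu$ for the (full rank) Gram matrix of $f_1,\dots,f_k$, the substitution $\tilde{\mathbf{g}}(m)=\tilde{\mathbf{f}}(m-1)$ turns $\tilde{G}_{0,n}$ into the one-step correlation $\frac1n\sum_m \tilde{\mathbf{f}}(m)\tilde{\mathbf{f}}^*(m-1)$ and $\tilde{G}_{1,n}$ into the two-step correlation $\frac1n\sum_m\tilde{\mathbf{f}}(m+1)\tilde{\mathbf{f}}^*(m-1)$. The independence collapse is the heart of the matter: the measurement function at the later time is i.i.d. and independent of the dynamics, so conditioning on the trajectory replaces it by its mean $\mathbf{f}$; the measurement function at the earlier time is independent of the dynamics maps, so conditioning on those maps replaces it by $\mathbf{f}$ as well. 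Together with (\ref{eq:NoisyAve}) and the invariance $\K\mathbf{f}=\mathbf{K}\mathbf{f}$, this yields $\tilde{G}_{0,n}\to \mathbf{K}G_*$ and $\tilde{G}_{1,n}\to \mathbf{K}^2 G_*$.

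Finally I would invoke the full-rank hypothesis to conclude. Since $G_*$ is invertible and $\K|_{\mathscr{F}}=\mathbf{K}$ is full rank by assumption, the square matrix $\mathbf{K}G_*$ is invertible, its pseudoinverse coincides with its genuine inverse $G_*^{-1}\mathbf{K}^{-1}$, and hence $C_n=\tilde{G}_{1,n}\tilde{G}_{0,n}^\dagger \to \mathbf{K}^2 G_*(\mathbf{K}G_*)^{-1}=\mathbf{K}$, the restriction of $\K$ to $\mathscr{F}$. The convergence of the dynamic eigenvalues, modes, and numerical eigenfunctions then follows exactly as in the remarks after Proposition \ref{pr:conv2}.

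The main obstacle is the extra factor of $\mathbf{K}$ that the one-step delay injects into both Gram matrices. Unlike in Proposition \ref{pr:conv2}, where $\tilde{G}_0\to\int_M\mathbf{f}\,\mathbf{g}^*\,d\mu$ carries no dynamics, here the delayed dual forces $\tilde{G}_0\to \mathbf{K}G_*$, so $\tilde{G}_0^\dagger$ only undoes that factor when $\mathbf{K}$ is invertible; this is precisely why the hypothesis that $\K|_{\mathscr{F}}$ has full rank is needed, and a singular $\mathbf{K}$ would make $C_n$ converge to something other than $\mathbf{K}$. A secondary care point is the bookkeeping of which noise term is independent of which dynamics map and state, so that every cross-covariance with the i.i.d. noise genuinely vanishes; this is the delayed-observable analogue of the explicit independence conditions imposed in Proposition \ref{pr:conv2}.
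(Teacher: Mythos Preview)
Your proposal is correct and follows essentially the same route as the paper: invoke Lemma \ref{le:iiderg} for ergodicity of the augmented process, compute the two limiting Gram matrices as $\mathbf{K}G_*$ and $\mathbf{K}^2G_*$ via the i.i.d.\ independence collapse, and then use the full-rank hypothesis on $\mathbf{K}$ to cancel the extra factor and recover $C_n\to\mathbf{K}$. Your write-up is in fact more explicit than the paper's about why the full-rank assumption on $\K|_{\mathscr{F}}$ is indispensable here (the delayed dual injects an extra $\mathbf{K}$ into $\tilde{G}_0$), which is a nice touch.
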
 
	
	\begin{proof}
		Let $\mathbf{K}$ be the resriction of $\K$ to $\mathscr{F}$. By Lemma \ref{le:iiderg}, $(x_t,f_{\omega_t})$ is an ergodic stationary sequence. Then, using ergodicity and the independence properties of $\tilde{\mathbf{f}}$, we have
		\begin{align*}
		\lim_{n\to\infty}\frac{1}{n}\sum_{m=1}^{n}\hat{\mathbf{f}}(m)\hat{\mathbf{g}}^*(m)&=\lim_{n\to\infty}\frac{1}{n}\sum_{m=0}^{n-1}\hat{\mathbf{f}}(m)\hat{\mathbf{f}}^*(m-1)=\int_M\int_\Omega \tilde{\mathbf{f}}_{\theta\omega}(T_\omega^{j+1} x)\tilde{\mathbf{f}}^*_{\omega}(x)\,dPd\mu\\
		&=\int_M \int_\Omega \tilde{\mathbf{f}}_{\theta\omega}(T_\omega x)dP\int_\Omega \tilde{\mathbf{f}}_\omega(x)dPd\mu=\mathbf{K}\int_M \mathbf{f}\,\mathbf{f}^*d\mu,
		\end{align*}
		which has full rank since $\mathbf{K}$ has full rank. Similarly, 
		\[\lim_{n\to\infty}\frac{1}{n}\sum_{m=1}^{n}\hat{\mathbf{f}}(m+1)\hat{\mathbf{g}}^*(m)=\lim_{n\to\infty}\frac{1}{n}\sum_{m=0}^{n-1}\hat{\mathbf{f}}(m+1)\hat{\mathbf{f}}^*(m-1)=\mathbf{K}^2\int_M \mathbf{f}\,\mathbf{f}^*d\mu.\]
		The rest of the proof follows Proposition \ref{pr:conv2}.
	\end{proof}
	
	\begin{remark}
		It is useful to note that if $T_\omega$ and $\theta$ were invertible, we would be able to define $\tilde{\mathbf{g}}_\omega=\tilde{\mathbf{f}}_{\theta_{-1}\omega}\circ(T_\omega^{-1})$, and $\tilde{\mathbf{g}}$ would meet the conditions of Proposition \ref{pr:conv2} exactly. However, if they are not invertible, we cannot necessarily define $\tilde{\mathbf{g}}_\omega\in L^2(M)$ explicitly since $T_\omega$ may not be invertible. However, since we are still able to evaluate time averages, the proof is nearly identical.
	\end{remark}
	
	\section{Time Delayed Observables and Krylov Subspace Methods}
	Another important type of noisy observable are time delayed observables. Allowing time delayed observables in DMD is useful for two reasons. First, time delays allow us to enrich our space of observables. Oftentimes, there are functions on our state space which cannot be measured by a certain set of observables, but can be observed if we allow time delays. For example, the velocity of a moving mass cannot be observed by any function on the position, but can be approximated using the position at two different times. Second, using time delays allows us to identify an invariant (or nearly invariant) subspace spanned by the Krylov sequence $f,\K f,...,\K^{k-1}f$.
	
	Of particular interest is an analogue of Hankel DMD for random systems, which uses a Krylov sequence of observables to generate our finite subspace. With Hankel DMD, we use a single observable, $f$, and its time delays to approximate the sequence $f,\K f,...,\K^{k-1}f$. If $\tilde{f}$ is an observable with measurement noise (or has no noise), we can define
	\[\tilde{\mathbf{f}}(x,\omega)=\begin{bmatrix}
	\tilde{f}(x,\omega) & \tilde{f}(T_\omega x,\theta\omega) & \hdots & \tilde{f}(T_\omega^{k-1}x,\theta_{k-1}\omega)
	\end{bmatrix}^T.\]
	By (\ref{eq:NoisyAve}), its mean is
	\[\int_\Omega \tilde{\mathbf{f}}\,dP=\begin{bmatrix} f & \K f & \hdots & \K^{k-1}f \end{bmatrix}^T,\]
	where $f=\E_P(\tilde{f})$. We can then use time delays of $\tilde{f}$ to approximate the Krylov sequence $f, \K f, ..., \K^{k-1}f$. Additionally, if we set $\tilde{\mathbf{g}}(t)=\tilde{\mathbf{f}}(t-k)$ in Algorithm 3, we will have the necessary independence conditions, and the time averages will converge as in (\ref{eq:noisyerg}) due to the pair $(x_t,\tilde{f}_{\omega_t})$ being an ergodic stationary variable.
	
	\begin{corollary}
		\label{cor:hankel}
		(Noise Resistant Hankel DMD) Let $\tilde{f}$ be an observable with measurement noise, with time samples $\hat{f}(t)=\tilde{f}(x_t,\omega_t).$ Let its mean, $f$, be such that the Krylov sequence $f,\K f,...,\K^{k-1}f$ spans a $k$-dimensional invariant subspace $\mathscr{F}$ and the restriction of $\K$ to $\mathscr{F}$ has full rank. Let 
		\[\hat{\mathbf{f}}(t)=\begin{bmatrix} \hat{f}(t) & \hat{f}(t+1) & \hdots & \hat{f}(t+k-1) \end{bmatrix}^T,\]
		and 
		\[\hat{\mathbf{g}}(t)=\hat{\mathbf{f}}(t-k)=\begin{bmatrix}
		\hat{f}(t-k) & \hat{f}(t-k+1) & \hdots & \hat{f}(t-1)
		\end{bmatrix}^T.\]
		Then the matrix $A$ generated by Algorithm 3 converges to the restriction of $\K$ to $\mathscr{F}$. If $\tilde{f}$ has no noise (i.e. $\tilde{f}(x,\omega)=f(x)$) we can use
		\[\hat{\mathbf{g}}'(t)=\hat{\mathbf{f}}(t-k+1)=\begin{bmatrix}
		\hat{f}(t-k+1) & \hat{f}(t-k+2) & \hdots & \hat{f}(t)
		\end{bmatrix}^T.\]
	\end{corollary}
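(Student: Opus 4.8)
The plan is to realize this as the mechanism behind Proposition~\ref{pr:conv2} and Corollary~\ref{cor:measnoise}, computing the limiting matrices $\tilde G_0$ and $\tilde G_1$ directly rather than verifying the hypotheses of Proposition~\ref{pr:conv2} verbatim. (In the Hankel setting the past-valued dual $\tilde{\mathbf g}$ is not literally independent of the forward dynamics $T_{\omega_t}$, since both are tied to $x_t$; only a conditional independence given the present state survives, so a direct computation in the style of Corollary~\ref{cor:measnoise} is needed.) First I would record the setup facts. Because $\tilde f$ carries only i.i.d. measurement noise, each component $\tilde f(t+i)$ of $\tilde{\mathbf f}(t)$ is independent of the dynamics up to time $t+i$, so $\tilde{\mathbf f}\in\mathscr{H}_+^k$ and, by (\ref{eq:NoisyAve}) applied componentwise, $\E_\Omega(\tilde{\mathbf f})=[f,\K f,\dots,\K^{k-1}f]^T=\mathbf f$, whose entries span $\mathscr{F}$ by hypothesis. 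Ergodicity in the sense of (\ref{eq:noisyerg}) holds because, by Lemma~\ref{le:iiderg}, the pair $(x_t,\tilde f_{\omega_t})$ is an ergodic stationary process, and every entry of $\tilde{\mathbf f}(m+j)\tilde{\mathbf g}^*(m)$ is a fixed measurable function of finitely many consecutive coordinates of this process.

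The decisive point is the independence that shifting by a full window of length $k$ provides. The measurement noise in $\tilde{\mathbf f}(m)$ lives at times $m,\dots,m+k-1$, while that in $\tilde{\mathbf g}(m)=\tilde{\mathbf f}(m-k)$ lives at the disjoint times $m-k,\dots,m-1$; since the measurement noise is i.i.d., these two families are independent, which is exactly what removes the covariance bias found in the failure analysis. To evaluate the limits I would condition on the pivot state $x_m$ and use the Markov property: given $x_m$, the forward block $\tilde{\mathbf f}(m)$ (times $\ge m$) and the backward block $\tilde{\mathbf g}(m)$ (times $\le m-1$) are conditionally independent, so the expectation factors into a product of conditional means. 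The conditional mean of the forward block is $\E(\tilde{\mathbf f}(m)\mid x_m)=\mathbf f(x_m)$, and by the adjoint formula (\ref{eq:adjoint}) that of the backward block is $\mathbf g(x_m):=[(\K^*)^k f,(\K^*)^{k-1}f,\dots,\K^* f]^T(x_m)$.

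Carrying this out gives $\tilde G_0\to\int_M\mathbf f\,\mathbf g^*\,d\mu$, and since advancing $\tilde{\mathbf f}(m)$ by one step transforms the forward conditional-mean vector by $\mathbf K$, one gets $\tilde G_1\to\int_M(\K\mathbf f)\mathbf g^*\,d\mu=\mathbf K\int_M\mathbf f\,\mathbf g^*\,d\mu=\mathbf K\tilde G_0$. Provided $\tilde G_0$ has full row rank, $\tilde G_0\tilde G_0^\dagger=I$, and hence $C=\tilde G_1\tilde G_0^\dagger\to\mathbf K$; the convergence of the dynamic eigenvalues and modes then follows exactly as in Proposition~\ref{pr:conv2}.

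The main obstacle I expect is precisely the full rank of $\tilde G_0=\int_M\mathbf f\,\mathbf g^*\,d\mu$. As in Corollary~\ref{cor:measnoise} this is where the hypothesis that $\K|_{\mathscr F}$ has full rank enters, but the argument is more delicate: the relevant object is now a \emph{cross}-Gram matrix against the adjoint Krylov sequence $\{(\K^*)^i f\}$, equivalently the Hankel matrix $\big[\langle\K^{k+i-j}f,f\rangle\big]_{i,j}$ of the autocorrelations $\langle\K^p f,f\rangle$. I would show it is invertible by proving that no nonzero $\phi\in\mathscr F$ satisfies $\langle\K^i\phi,f\rangle=0$ for all $i$, i.e. that the components of $\mathbf g$ together span $\mathscr F$ (the analog of the condition $\mathscr F\subset\operatorname{span}\{g_j\}$ in Proposition~\ref{pr:conv2}), using that $f$ is a cyclic generator of $\mathscr F$ and that $\K|_{\mathscr F}$ is invertible. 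Finally, for the noiseless case $\tilde f=f$ there is no measurement-noise variance to cancel, so overlapping windows are harmless and a shift of $k-1$ suffices; the identical conditioning argument applies to $\tilde{\mathbf g}'$, the only change being that the minimal forward gap between the two blocks drops from $1$ to $0$, which replaces $\mathbf g$ by the full adjoint Krylov sequence $[(\K^*)^{k-1}f,\dots,f]^T$ but introduces no bias.
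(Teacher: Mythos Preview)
Your conditioning argument via the Markov property and the adjoint is sound and is in fact the route the paper's \emph{commented-out} original proof takes; the published proof is different. You correctly observe that $\tilde{\mathbf g}(t)=\tilde{\mathbf f}(t-k)$ is not literally independent of $T_{\omega_t}$, only conditionally so given $x_t$, and your computation $\tilde G_0\to\int_M\mathbf f\,\mathbf g^*\,d\mu$, $\tilde G_1\to\mathbf K\,\tilde G_0$ is right. The paper instead asserts the factorizations $\tilde G_0=\mathbf K^{k}\int_M\mathbf f\,\mathbf f^*\,d\mu$ and $\tilde G_1=\mathbf K^{k+1}\int_M\mathbf f\,\mathbf f^*\,d\mu$ directly, pulling all Koopman iterates out in front of the ordinary Gram matrix of $\mathbf f$. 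That buys an immediate full-rank conclusion (invertible $\mathbf K$ times a positive-definite Gram matrix), at the price of a factorization step that effectively treats the forward and backward delay blocks as if they shared no dynamical noise---precisely the subtlety you paused on.

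Where your proposal has a genuine gap is the step you yourself flagged: full row rank of the cross-Gram matrix $\tilde G_0$. Your plan reduces this to showing that no nonzero $\phi\in\mathscr F$ satisfies $\langle\K^{p}\phi,f\rangle=0$ for all $p$, which (after using invertibility and Cayley--Hamilton) is exactly the statement that $f$ is cyclic for the \emph{adjoint} $(\K|_{\mathscr F})^{*}$. But cyclicity for $A$ together with invertibility of $A$ does not give cyclicity for $A^{*}$. Take $A=\begin{pmatrix}1/2&1/2\\0&1/4\end{pmatrix}$ (an invertible contraction) and $f=(0,1)^{T}$: then $Af=(1/2,1/4)^{T}$, so $f$ is cyclic for $A$, yet $A^{*}f=(0,1/4)^{T}$, so $f$ is an $A^{*}$-eigenvector. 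Here $c_p=\langle A^{p}f,f\rangle=(1/4)^{p}$ and the Toeplitz matrix $\bigl[c_{k+i-j}\bigr]=\begin{pmatrix}1/16&1/4\\1/64&1/16\end{pmatrix}$ is singular. So ``cyclic plus invertible'' is not enough to force the Hankel/Toeplitz correlation matrix to be nonsingular; you would need either the paper's direct factorization $\tilde G_0=\mathbf K^{k}\int_M\mathbf f\,\mathbf f^{*}\,d\mu$ or some additional structural hypothesis on $\K$ and $f$ to close this step.
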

	We refer to Corollary \ref{cor:hankel} as a variant of Hankel DMD for random systems since the $X, Y,$ and $Z$ matrices in Algorithm 3 will be Hankel matrices and it generates a Krylov subspace of $\K$. For a different choice of $\tilde{\mathbf{g}}$ (i.e. $\tilde{\mathbf{g}}=\tilde{\mathbf{f}}$), this is equivalent to Hankel DMD.
	
	\begin{proof}
		Using (\ref{eq:NoisyAve}), we can see that the components of $\mathbf{f}$ are $f,\K f,...,\K^{k-1}f$, which spans $\mathscr{F}$. Additionally, using the independence properties of $\tilde{f}$, we have $\tilde{\mathbf{f}}_{\omega_t}$ and $\tilde{\mathbf{f}}_{\omega_{t+s}}$ are independent for $s\geq k$. Since $(x_t,\tilde{f}_{\omega_t})$ is ergodic by Lemma \ref{le:iiderg}, we can take the time averages
		\begin{align*}
		\lim_{n\to\infty}\frac{1}{n}\sum_{m=k}^{n+k-1}\mathbf{f}(m)\mathbf{g}^*(m)&=\lim_{n\to\infty}\frac{1}{n}\sum_{m=0}^{n-1}\mathbf{f}(m+k)\mathbf{f}^*(m)=\int_M\int_\Omega \tilde{\mathbf{f}}_{\theta^k\omega}(T_\omega^kx)\tilde{\mathbf{f}}^*_\omega(x)dPd\mu\\
		&=\int_M\int_\Omega \tilde{\mathbf{f}}_{\theta_k\omega}(T_\omega^kx)\tilde{\mathbf{f}}^*(x)dPd\mu=\mathbf{K}^k\int_M \mathbf{f}\,\mathbf{f}^*d\mu,
		\end{align*}
		which has full rank since $\mathbf{K}$ has full rank. Similarly, we can take the time average
		\[\lim_{n\to\infty}\frac{1}{n}\sum_{m=k}^{n+k-1}\mathbf{f}(m+1)\mathbf{g}^*(m)=\mathbf{K}^{k+1}\int_M\mathbf{f}\,\mathbf{f}^*d\mu,\]
		and the rest of the proof follows Proposition \ref{pr:conv2}. If $\tilde{f}_\omega=f$, $\tilde{\mathbf{f}}_{\omega_t}$ and $\tilde{\mathbf{f}}_{\omega_{t+k-1}}$ are independent, and we can take the time averages using $\hat{\mathbf{g}}(t)=\hat{\mathbf{f}}(t-k+1)$.
		
	\end{proof}

	Corollary \ref{cor:hankel} allows us to compute an approximation of $\K$ using the data from a single observable evaluated along a single trajectory. However, the method does not require that the we only use time delays of a single observable. In general, even if $\tilde{f}$ is vector valued, we can take time delays of $\tilde{f}$ as in Corollary \ref{cor:hankel} so long as we span the proper subspace. The dual observable, $\tilde{\mathbf{g}}$, is also generated in the same way.

	\section{Conditioning of Algorithm 3}
	Asymptotically, the convergence rate of Algorithm 3 is governed by the rate at which $G_{0,n}$ and $G_{1,n}$ converges to $G_0$ and $G_1$, as defined in the proof of Proposition \ref{pr:conv2}. This is governed by the convergence rate of ergodic sampling. However, Algorithm 3 also requires the pseudo-inversion of $G_{0,n}\approx G_0$. If the matrix $G_0$ is ill-conditioned, small errors in the time averages approximations of $G_0$ and $G_{1}$ can cause large errors in our DMD operator.	The condition number of $G_0$, $\kappa(G_0)$, can become large if either set of observables, $f_1,...,f_k$ or $g_1,...,g_l$, are close to being linearly dependent.
	
	Both of these issues arise particularly often when using Hankel DMD. With Hankel DMD, we use the basis $f,\K f, ...,\K^{k-1}f$ as our basis for $\mathscr{F}$. This is often a poor choice of basis, as $f$ and $\K f$ may be close to being linearly dependent. This is particularly the case when data from a continuous time system is sampled with a short period, such as from a discretization of an ODE or SDE. Similarly, if $j$ is large or $\K$ has eigenvalues close to zero, $\K^j f$ and $\K^{j+1}f$ may be close to being linearly dependent, which will also cause conditioning issues.
	
	\subsection{SVD Based Algorithms}
	
	To combat these conditioning issues, we have some leeway in the observables we choose for $\tilde{\mathbf{f}}$ and $\tilde{\mathbf{g}}$. Looking at $G_0$, we have
	\begin{equation}
	\label{eq:DaulBasis}
	G_0=\int_M \mathbf{g}\,\mathbf{f}^*\,\,d\mu=\int_M \begin{bmatrix} f_1 & f_2 & \hdots & f_k \end{bmatrix}^T\begin{bmatrix} g_1^* & g_2^* & \hdots & g_l^*\end{bmatrix}
	d\mu.
	\end{equation}
	Ideally, $\{g_1,...,g_l\}$ and $\{f_1,...,f_k\}$ would be orthonormal bases for $\mathscr{F}$, so $\kappa(G_0)$ would be $1$. However, we rarely can choose such bases a priori. Instead, we can try to augment $\tilde{\mathbf{f}}$ and $\tilde{\mathbf{g}}$ with extra observables and use the singular value decomposition to choose $k$ observables which form a better conditioned basis for $\mathscr{F}$, similar to Algorithm 2. This brings us to the SVD implementation of Algorithm 3.

	\noindent\rule{\textwidth}{0.5pt}
	Algorithm 4: SVD implemented Noise Resistant DMD\\
	\noindent\rule{\textwidth}{0.5pt}
	Let $\tilde{\mathbf{f}}\in \mathscr{H}^{l_1}$, and $\tilde{\mathbf{g}}\in \mathscr{H}^{l_2},~l_1,l_2\geq k$ be noisy observables on our system. Let $\hat{\mathbf{f}}(t)=\tilde{\mathbf{f}}(x_t,\omega_t)$ and $\hat{\mathbf{g}}(t)=\tilde{\mathbf{g}}(x_t,\omega_t)$ denote the time samples of the observables.\\
	1: Construct the data matrices	
	\[X=\begin{bmatrix}
	\hat{\mathbf{f}}(0) & \hat{\mathbf{f}}(1) & \hdots & \hat{\mathbf{f}}(n-1)
	\end{bmatrix},\]
	\[Y=\begin{bmatrix}
	\hat{\mathbf{f}}(1) & \hat{\mathbf{f}}(2) & \hdots & \hat{\mathbf{f}}(n)
	\end{bmatrix},\]
	and
	\[Z=\begin{bmatrix}
	\hat{\mathbf{g}}(0) & \hat{\mathbf{g}}(1) & \hdots & \hat{\mathbf{g}}(n-1)
	\end{bmatrix}.\]
	2: Form the matrices $\tilde{G}_{0}=\frac{1}{n}XZ^*$ and $\tilde{G}_{1}=\frac{1}{n}YZ^*$.\\
	3: Compute the truncated SVD of $\tilde{G}_0$ using the first $k$ singular values:
	\[\tilde{G}_0 \approx W_kS_kV_k^*.\]
	5: Form the matrix 
	\[A=S_k^{-1}W_k^*\tilde{G}_{1}V_k.\]	
	6: Compute the eigenvalues and left and right eigenvectors, $(\lambda_i,w_i,u_i)$ of $A$. The dynamic eigenvalues are $\lambda_i$, the dynamic modes are
	\[v_i=W_kS_ku_i,\]
	and the numerical eigenfunctions are
	\[\hat{\phi}_i=w_iS_k^{-1}W_k^*X.\]
	\noindent\rule{\textwidth}{0.5pt}
	
	Similar to Algorithm 2, Algorithm 4 uses the SVD to choose a basis of observables to use in Algorithm 1. It is equivalent to performing Algorithm 3 using data from the observable $(S_k^{-1}W_k^*)\tilde{\mathbf{f}}$, while leaving $\tilde{\mathbf{g}}$ unchanged. It is important to note that Algorithm 4 uses the components of $(S_k^{-1}W_k^*)\mathbf{f}$ to as a basis for $\mathscr{F}$ where $\mathbf{f}=\E_P(\tilde{\mathbf{f}})$ as usual. When we add observables to $\tilde{\mathbf{f}}$, we must ensure that we stay within our invariant subspace. One way to guarantee this is to use time delays of our original observables.
	
	\subsection{Augmented Dual Observables}
	
	Typically, augmenting $\tilde{\mathbf{f}}$ with extra observables and using Algorithm 4 to truncate the singular values is an effective way to improve the conditioning of the problem. However, we have an alternate tool at our disposal. While each component of $\mathbf{f}$ must lie within $\mathscr{F}$, the components of $\mathbf{g}$ can be arbitrary, and we do not need to take an SVD to truncate the extra observables in $\mathbf{g}$. Since we do not need to worry about leaving our invariant subspace, we can add arbitrary functions of $\tilde{\mathbf{g}}$ (e.g. powers of $\tilde{\mathbf{g}}$) to our dual observable and still expect convergence. However, while this can improve conditioning, it also can slow down the convergence of the time averages, and should only be done when the error stems from poor conditioning.

	\section{Numerical Examples}
	In this section, we will test the various DMD algorithms presented in this paper using both observables with measurement noise and time delayed observables. For each system and each DMD method, we generate five realizations of the DMD operator and compare the eigenvalues with analytically obtained true (or approximate) eigenvalues of the stochastic Koopman eigenvalues. Since the purpose of this paper is to provide a new algorithm that is provably unbaised, we only compare the noise resistant algorithms to standard DMD algorithms. Comparisons on the speed of convergence and numerical stability of various DMD algorithms not the primary purpose of this paper.
	\subsection{Random Rotation on a Circle}
	
	Consider a rotation on the circle. The dynamical system is defined by
	\begin{equation}
	\label{eq:Rotation}
	x_{t+1}=x_t+\nu,
	\end{equation}
	where $\nu\in S^1$. If we perturb (\ref{eq:Rotation}) by adding noise to the rotation rate we obtain the random system
	\begin{equation}
	\label{eq:RandomRotation}
	x_t+1=x_{t}+\nu+\pi(\omega_t)
	\end{equation}
	where $\pi(\omega_t)\in S^1$ is an i.i.d. random variable. For the stochastic Koopman operator associated with (\ref{eq:RandomRotation}), the functions $\varphi_n(x)=e^{inx}$ are eigenfunctions with eigenvalues $\lambda_i=\E(e^{in(\nu+\pi(\omega)})$, since
	\[\K\varphi_i(x)=\E(\varphi_i(T_\omega x))=\int_\Omega e^{in(x+\nu+\pi(\omega))}dP=e^{inx}\int_\Omega e^{in(\nu+\pi(\omega))}dP=\varphi_i(x)\lambda_i.\]
	
	We can compare these eigenvalues with the results obtained from our different DMD algorithms. We will set our system parameter to $\nu=\frac{1}{2}$ and draw $\pi(\omega_t)$ from the uniform distribution over $[-\frac{1}{2},\frac{1}{2}]$. In this case the eigenvalues are $\lambda_i=\frac{i-ie^{in}}{n}$. For the first test, we will compare Algorithms 1 and 3 using a set of observables with measurement noise. We will let our observable be 
	\begin{equation}
	\label{eq:RotObs1}
	\hat{\mathbf{f}}(t)=[\sin(x_t),...,\sin(5x_t),\cos(x_t),...,\cos(5x_t)]^T+\mathbf{m}(t),\end{equation} where $\mathbf{m}(t)\in[-0.5,0.5]^{10}$ is measurement noise drawn from the uniform distribution. Algorithm 1 is applied directly to the data from measurements of $\tilde{\mathbf{f}}$, and for Algorithm 3 we let $\tilde{\mathbf{g}}(t)=\tilde{\mathbf{f}}(t-1)$.
	
	For the second test, we let $f=\sin(x)+\sin(2x)+\sin(3x)$, and use time delays to generate $\hat{\mathbf{f}}$:
	\begin{equation}
	\label{eq:RotObs2}
	\hat{\mathbf{f}}(t)=\begin{bmatrix}
	f(x_t) & f(x_{t+1}) & \hdots & f(x_{t+d}).
	\end{bmatrix}^T.
	\end{equation}
	To perform Hankel DMD, we take five time delays ($d=5$ in (\ref{eq:RotObs2})) to generate $\tilde{\mathbf{f}}$, and use the data directly in Algorithm 1. However, if we try to perform Noise Resistant Hankel DMD using these observables, Algorithm 3 is poorly conditioned and and the eigenvalues are inaccurate. Instead, we use $24$ time delays of $\tilde{f}$ to generate $\tilde{\mathbf{f}}$ (setting $d=24$ in (\ref{eq:RotObs2}), and use Algorithm 4 (letting $\hat{\mathbf{g}}(t)=\hat{\mathbf{f}}(t-24)$) to truncate to the leading six singular values.
	Finally, we use Algorithm 4 again using only eight time delays to generate $\tilde{\mathbf{f}}$, but augment $\hat{\mathbf{g}}$ with extra observables to improve conditioning. We let $\hat{\mathbf{g}}$ to contain the observables $\hat{f},\hat{f}^2,$ and $\hat{f}^3$, as well as 42 time shifts of each of these functions:
	\[\hat{\mathbf{g}}=\begin{bmatrix}
	\hat{f}(t-42) & \hat{f}(t-42)^2 & \hat{f}(t-42)^3 & \hdots & \hat{f}(t) & \hat{f}(t)^2 & \hat{f}(t)^3
	\end{bmatrix}^T.\]

	\begin{figure}[htb]
		\label{fig:Rotation}
		\includegraphics[scale=0.165]{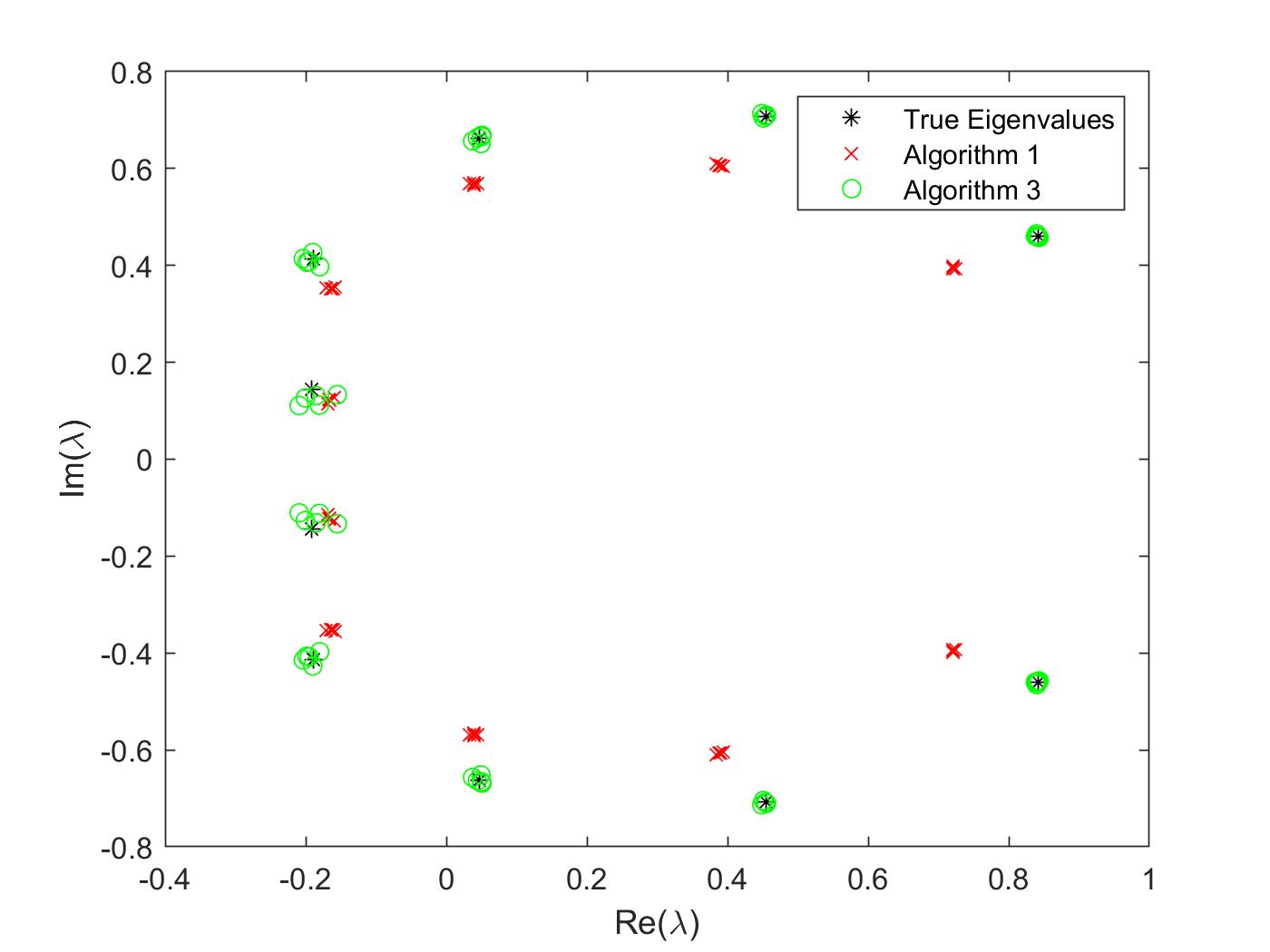}	
		\includegraphics[scale=0.165]{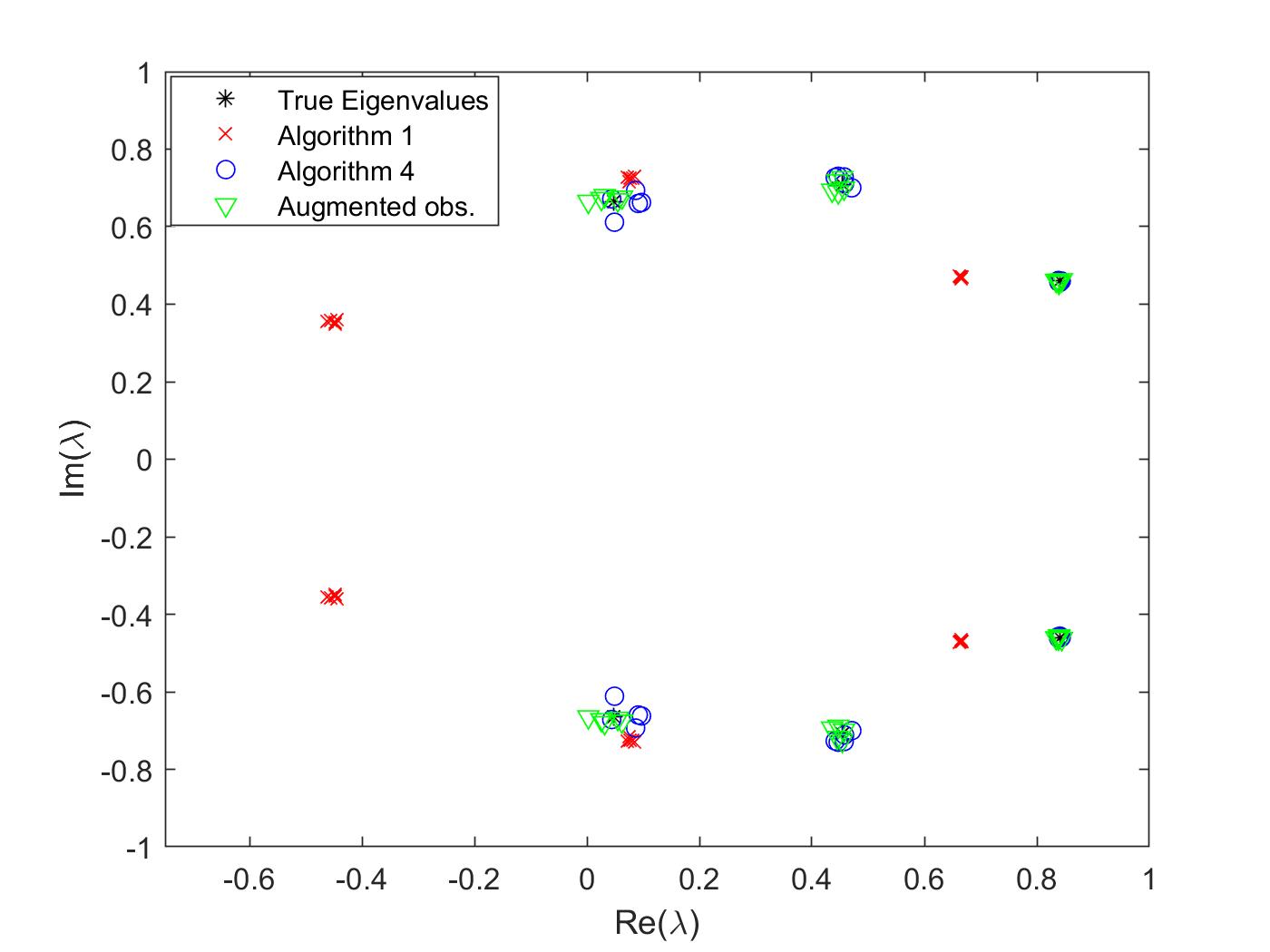}

		\caption{(Left) Outputs of Algorithm 1 and Algorithm 3 using (\ref{eq:RotObs1}) as observables on (\ref{eq:RandomRotation}) with 25 000 data points. Algorithm 1 shows a clear bias in the approximate eigenvalues while Algorithm 3 captures them accurately.\\
			(Right) DMD outputs from Algorithms 1 and 4 using (\ref{eq:RotObs2}) as observables on $(\ref{eq:RandomRotation})$ with 25 000 data points. Algorithm 4 is performed a second time after augmenting the dual observable to improve conditioning. Algorithm 1 shows a bias in the eigenvalues while Algorithm 4 gives an unbiased approximation of the eigenvalues in both cases. \\ Each algorithm is run five times on different sample trajectories.}
	\end{figure}
	
	As can be seen in Figure \ref{fig:Rotation}, Algorithm 1 fails to accurately approximate the eigenvalues of $\K$ in both tests. For the first test, Algorithm 3 gives accurate approximations to the eigenvalues of $\K$. Approximating the stochastic Koopman operator using the time delayed observables, (\ref{eq:RotObs2}) is more difficult because the conditioning of the matrix $G_0$ is very poor, which amplifies the errors in our time averages. However, including extra time delays and using Algorithm 4 to truncate to the leading singular values obtains accurate results. Further, the precision is increased when we augment $\tilde{\mathbf{g}}$ with extra observables.
	
	\subsection{Linear System with Additive Noise}
	Consider the linear system in $\R^4$:
	\begin{equation}
	\label{eq:Lineardet}
	\mathbf{x}(t+1)
	=\begin{bmatrix} 0.75 & 0.5 & 0.1 & 2 \\ 0 & 0.2 & 0.8 & 1 \\ 0 & -0.8 & 0.2 & 0.5 \\ 0 & 0 & 0 & -0.85 \end{bmatrix}
	\begin{bmatrix} x_1(t) \\ x_2(t)\\ x_3(t) \\ x_4(t)\end{bmatrix}=A\mathbf{x}(t).
	\end{equation}
	We can perturb (\ref{eq:Lineardet}) by perturbing the matrix $A$ with a random matrix $\delta$ and adding a random forcing term $b$. We obtain the random system
	\begin{equation}
	\label{eq:Linear}
	\mathbf{x}(t+1)
	=(A+\delta_t)\mathbf{x}(t)+b_t,
	\end{equation}
	where $b_t\in \R^4$ and $\delta_t\in \R^{4\times 4}$ are i.i.d. random variables. Let $(w_i,\lambda_i), i=1,...,4$ be the left eigenpairs of $A$. If $b_t$ and $\delta_t$ are assumed to have zero mean, $w_i^T\mathbf{x}$ is an eigenfunction of $\K$ with eigenvalue $\lambda_i$. For this example we will assume each component of $b_t$ and $\delta_t$ is drawn from randomly from a uniform distribution. The components of $b_t$ will be drawn from $[-0.5,0.5]$ while those of $\delta_t$ will be drawn from $[-0.25,0.25]$. As before, we will test Algorithms 1 and 3 using observables with measurement noise and time delayed observables. For the first test, we will use state observables with Gaussian measurement noise:
	\begin{equation}
	\label{eq:LinObs1}
	\hat{\mathbf{f}}(t)=\mathbf{x}(t)+\mathbf{m}(t)
	\end{equation}
	where each component of $\mathbf{m}(t)\in \R^4$ is drawn from the standard normal distribution. As before, will let $\hat{\mathbf{g}}(t)=\hat{\mathbf{f}}(t-1)$.
	
	For the second test, to generate the time delayed observables, we only use the first component of the state, $\hat{f}(t)=x_1(t)$, and use three time delays:
	\begin{equation}
	\label{eq:LinObs2}
	\hat{\mathbf{f}}(t)=\begin{bmatrix} \hat{f}(t) & \hat{f}(t+1) & \hat{f}(t+2) & \hat{f}(t+3)\end{bmatrix}.
	\end{equation}
	We will apply Algorithm 1 directly to this matrix, while for Algorithm 3 we let $\hat{\mathbf{g}}(t)=\hat{\mathbf{f}}(t-3)$. 
	
	\begin{figure}[htb]
		\label{fig:Linear}
		\includegraphics[scale=0.165]{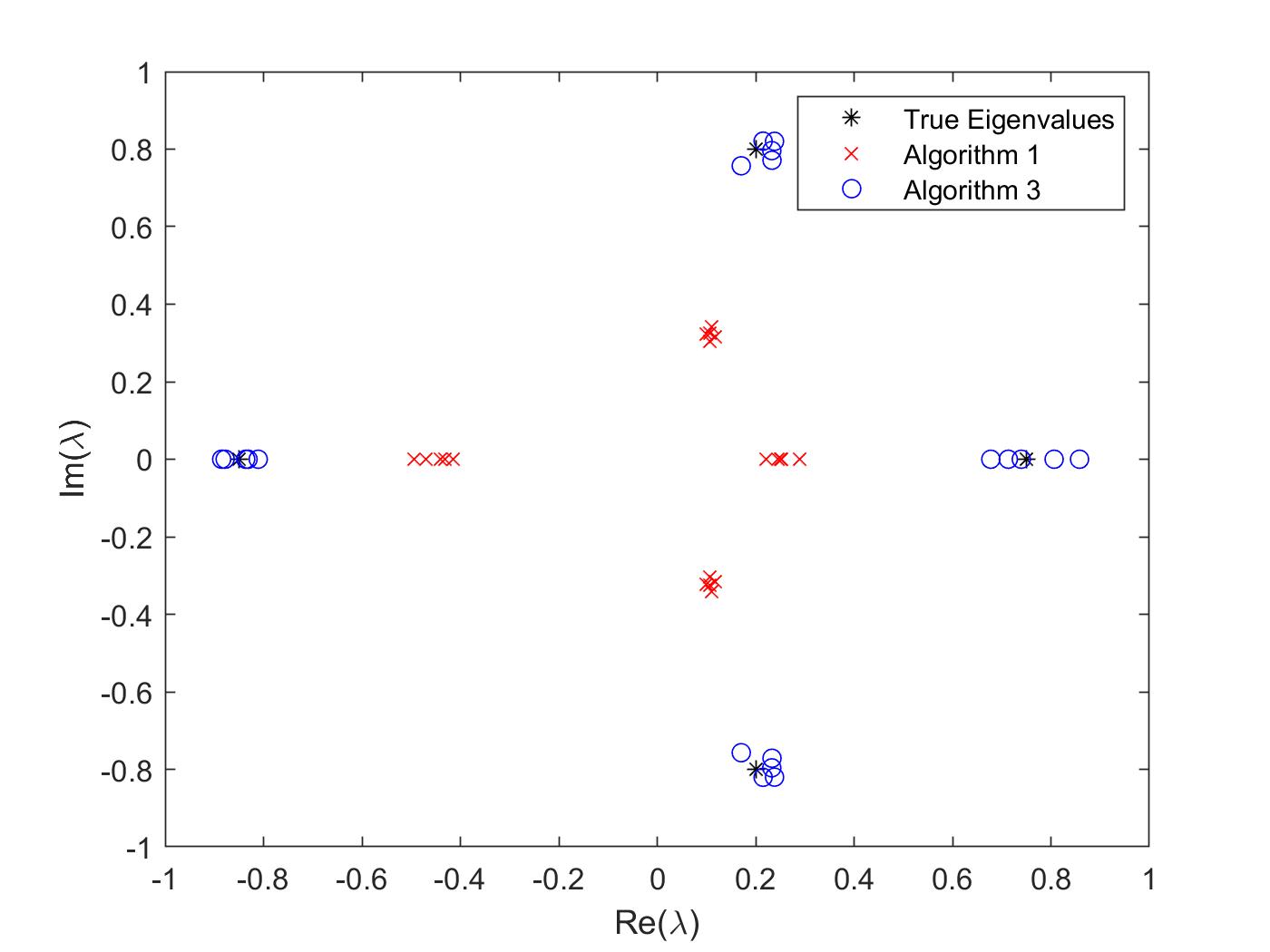}	
		\includegraphics[scale=0.165]{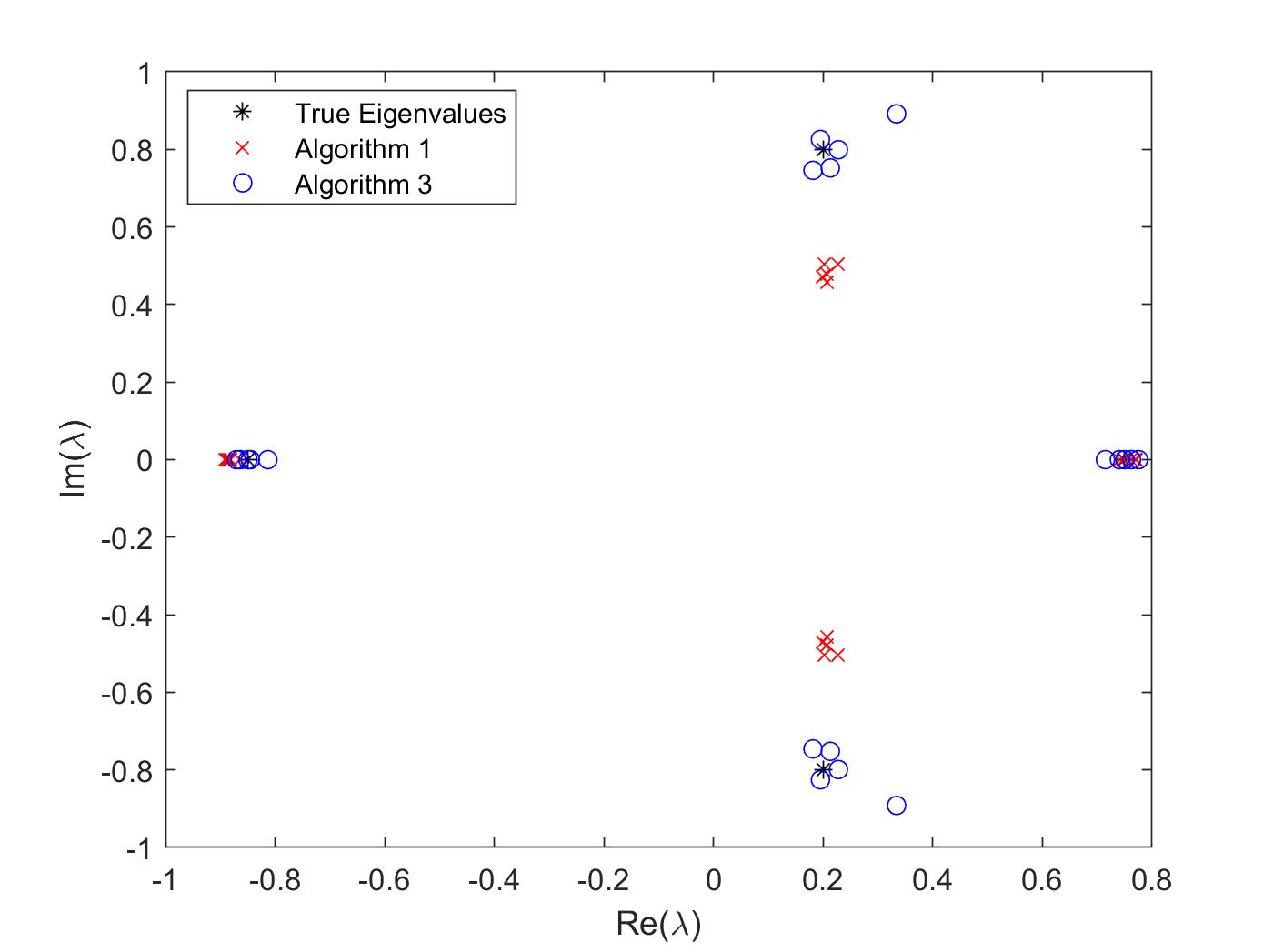}

		\caption{(Left) Outputs of Algorithm 1 and Algorithm 3 using state observables with measurement noise (\ref{eq:LinObs1}) on  5 000 data points from (\ref{eq:Linear}). \\(Right) Outputs of Algorithm 1 and Algorithm 3 using (\ref{eq:LinObs2}) as observables on $(\ref{eq:Linear})$ with 5 000 data points. For both cases, Algorithm 3 is unbiased in approximating the eigenvalues while algorithm 1 exhibits a clear bias. \\ Each algorithm is run five times on different sample trajectories.}
	\end{figure}
	
	Figure \ref{fig:Linear} shows that the eigenvalues generated by Algorithm 1 again fail to accurately approximate those of $\K$. However, for both sets of observables, Algorithm 3 estimates the eigenvalues of $\K$ well. Since we did not run into conditioning issues, we did not test the results using Algorithm 4 or an augmented dual observable.
	
	\subsection{Stuart Landau Equations}
	Consider the stochastic Stuart Landau equations defined by 
	\begin{align}
	\label{eq:Stuart1}
	dr &= (\delta r-r^3+\frac{\epsilon^2}{r})dt+\epsilon dW_r\\
	\label{eq:Stuart2}
	d\theta &= (\gamma-\beta r^2)dt+\frac{\epsilon}{r} dW_\theta,	
	\end{align}
	where $W_r$ and $W_\theta$ satisfy
	\begin{align*}
	dW_r&=\cos\theta\,dW_x + \sin\theta\,dW_y\\
	dW_\theta&=-\sin\theta\,dW_x+\cos\theta\,dW_y
	\end{align*}
	for independent Wiener processes $dW_x$ and $dW_y$. It was shown in \cite{Tantet} that for small $\epsilon$ and $\delta>0$, the (continuous time) stochastic Koopman eigenvalues are given by
	\begin{align*}
	\lambda_{l,n}=\begin{cases}
	-\frac{n^2\epsilon^2(1+\beta^2)}{2\delta}+in\omega_0+\mathcal{O}(\epsilon^4) & l=0\\
	-2l\delta+in\omega_0+\mathcal{O}(\epsilon^2) & l>0,
	\end{cases}
	\end{align*}
	where $\omega_0=\gamma-\beta\delta$.
	
	Let $\gamma=\beta=1$, $\delta=1/2$, and $\epsilon=0.05$ in (\ref{eq:Stuart1}) and (\ref{eq:Stuart2}). Define the observables
	\[f_k(r,\theta)=e^{ik(\theta-(\log(2r))}.\]
	First, we will let
	\begin{equation}
	\label{eq:StuartObs}
	\hat{\mathbf{f}}(t)=[f_1(x_t),f_{-1}(x_t),...,f_6(x_t),f_{-6}(x_t)]^T+\mathbf{m}_1(t)+i\mathbf{m}_2(t),
	\end{equation}
	where each component of $\mathbf{m}_1(t)$ and $\mathbf{m}_2(t)$ is drawn independently from a normal distribution with mean $0$ and variance $1/4$. In Algorithm 3, we let $\hat{\mathbf{g}}(t)=\hat{\mathbf{f}}(t-1)$. The (continuous time) eigenvalues generated by Algorithms 1 and 3 are shown from a simulation with 10,000 data points with a time step of 0.05 in Figure \ref{fig:Stuart}.
	
	To test Hankel DMD, we use the observable
	\[f=\sum_{k=1}^6(f_k+f_{-k}),\]
	and let $\tilde{\mathbf{f}}$ contain $f$ and $d$ time delays of $f$:
	\begin{equation}
	\label{eq:StuartObs2}
	\hat{\mathbf{f}}(t)=\begin{bmatrix}
	f(x_t) & f(x_{t+1}) & \hdots & f(x_{t+d})
	\end{bmatrix}.
	\end{equation}
	Due to the poor conditioning of Algorithms 1 and 3, the eigenvalues they generate are highly innaccurate, so we instead implement Algorithms 2 and 4. In each case, we let $d=399$ and truncate the SVD to the leading $12$ singular values. As usual, we let $\hat{\mathbf{g}}=\hat{\mathbf{f}}(t-d)$ in Algorithm 4. The results shown in Figure \ref{fig:Stuart} are from a simulation with 100,000 data points and a time step of 0.05.
	
	\begin{figure}[htb]
		\label{fig:Stuart}
		\includegraphics[scale=0.165]{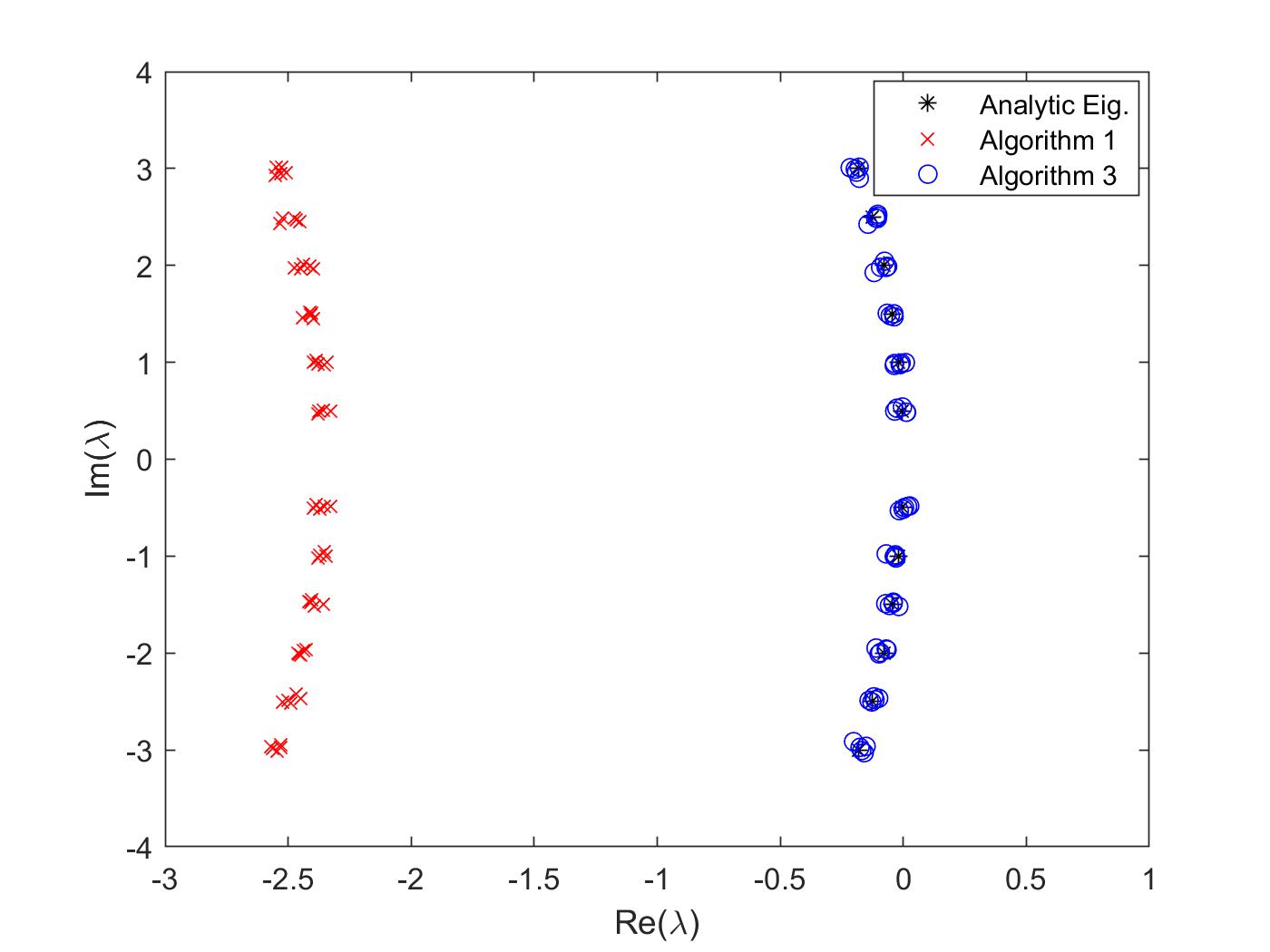}	
		\includegraphics[scale=0.165]{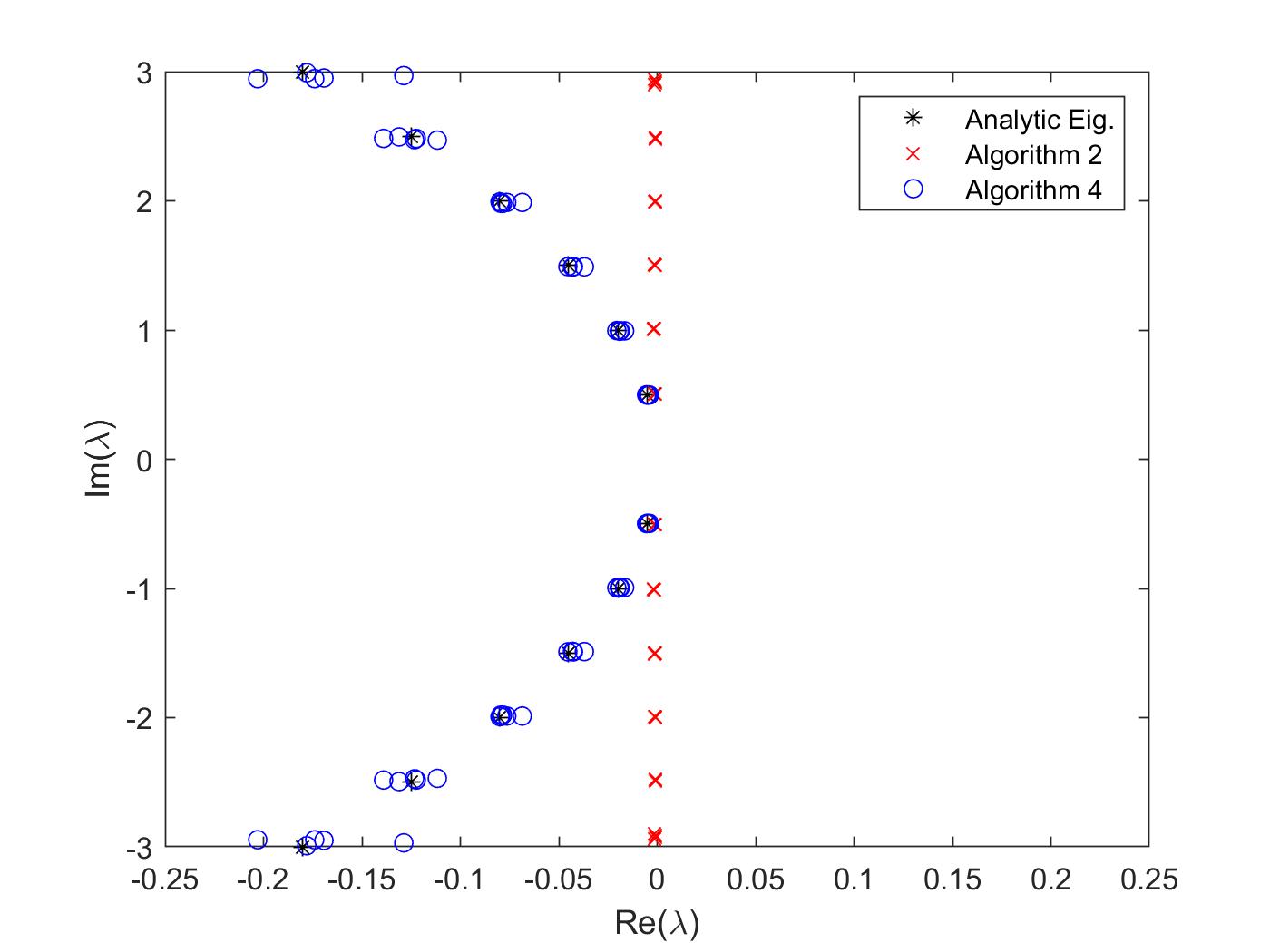}

		\caption{(Left) Outputs of Algorithm 1 and Algorithm 3 using observables with measurement noise (\ref{eq:StuartObs}). The data is taken over 20 000 data points from (\ref{eq:Stuart1}) and (\ref{eq:Stuart2}) with a time step of 0.05. The eigenvalues produced by Algorithm 1 are biased towards the left hand plane while Algorithm 1 captures them accurately.
			(Right) Outputs of Algorithm 2 and Algorithm 4 using (\ref{eq:StuartObs2}) as observables on (\ref{eq:Stuart1}) and (\ref{eq:Stuart2}). The Algorithms used 200 000 data points with a time step of 0.05. Algorithm 4 captures most of the eigenvalues without bias while Algorithm 2 biases all eigenvalues towards the imaginary axis. \\ Each algorithm is run five times on different sample trajectories.}
	\end{figure}

	As can be seen in Figure \ref{fig:Stuart}, Algorithm 1 exhibits a clear bias towards the left of the complex plane using observables with measurement noise, although it appears to accurately estimate the imaginary part of the eigenvalue. Algorithm 3, on the other hand, appears to give a mostly accurate spectrum. When using time delayed observables for Hankel DMD, Algorithms 1 and 3 were very poorly conditioned, and gave eigenvalues far outside the windows shown in Figure \ref{fig:Stuart}. When using Algorithms 2 and truncating to the 12 dominant singluar values, we again see that the imaginary parts of the eigenvalues seem to be captured, but the real parts are all biased to the right. Algorithm 4, however, again captures the correct spectrum, but with some error for the most dissipative eigenvalues.

	\section{Conclusions}
	
	In this paper we analyzed the convergence of DMD algorithms for random dynamical systems, culminating in the introduction of a new DMD algorithm that converges to the spectrum of the stochastic Koopman operator in the presence of both random dynamics and noisy observables. This allows us to avoid the bias in standard DMD algorithms that can come from ``overfitting" to the noise. We then specialized the algorithm to handle observables with i.i.d. measurement noise and time delayed observables and showed that measurements of a single set of observables was sufficient to generate an approximation of the stochastic Koopman operator. In particular, we demonstrated that a single trajectory of a single observable could be used to generate a Krylov subspace of the operator, which allows us to use DMD without needing to choose a basis of observables. 
	
	This algorithm provides a method for modeling complex systems where a deterministic model is unfeasible. This could be because a full state model would be to complex, observables of the full state are unavailable, or measurements come with uncertainty. A possible extension of this algorithm could adapt it to handle data from systems with control inputs, which could be used to develop control algorithms for random dynamical systems.\\
	
	\textbf{Acknowledgments:} 	This research was funded by the grants ARO-MURI W911NF-17-1-0306 and NSF EFRI C\# SoRo 1935327.

	\bibliographystyle{plain}
	\bibliography{Sources.bib}

\end{document}